\numberwithin{equation}{section}
\newcommand{\R}{\mathbb{R}}
\newcommand{\Z}{\mathbb{Z}}
\newcommand{\N}{\mathbb{N}}
\newcommand{\T}{\mathbb{T}}
\newcommand{\F}{\mathbb{F}}
\newcommand{\diff}{\mathrm{d}}
\newcommand{\re}{\mathrm{Re\,}}
\DeclareMathOperator{\dom}{dom}
\DeclareMathOperator{\supp}{supp}
\newtheorem{theorem}{Theorem}[section]
\newtheorem*{main theorem}{Main Theorem}
\newtheorem{lemma}[theorem]{Lemma}
\newtheorem{definition}[theorem]{Definition}
\newtheorem{remark}[theorem]{Remark}
\newtheorem{assumption}{Assumption}
\definecolor{luh-dark-blue}{rgb}{0.0, 0.313, 0.608}
\definecolor{lred}{rgb}{1.0,0.5,0.5}
\title[Well-posedness in dispersive equations]{Classical well-posedness in dispersive equations with nonlinearities of mild regularity, and a composition theorem in Besov spaces}
\author{Mats Ehrnstr\"{o}m}
\address{Department of Mathematical Sciences, NTNU Norwegian University of Science and Technology, 7491 Trondheim, Norway}
\email{mats.ehrnstrom@math.ntnu.no}
\author{Long Pei}
\address{Department of Mathematics, KTH Royal Institute of Technology, 10044 Stockholm, Sweden
\newline
Department of Mathematical Sciences, NTNU Norwegian University of Science and Technology, 7491 Trondheim, Norway
}
\email{longp@kth.se}
\thanks{\color{black} Both authors acknowledge the support by grant no. 231668 by the Research Council of Norway. M.E. additionally acknowledges the support by grant no. 250070 by the same source.}
\keywords{Local well-posedness; dispersive equations; composition theorems; Besov spaces}
\subjclass[2000]{47J35 (primary); 35Q53, 45J05, 76B15.}
\begin{document}

\begin{abstract}
For both localized and periodic initial data, we prove local existence in classical energy space $H^{s}$, $s>\frac{3}{2}$, for a class of dispersive equations \(u_t + (n(u))_x + L u_x = 0\) with nonlinearities of mild regularity. Our results are valid for symmetric Fourier multiplier operators $L$ whose symbol is of temperate growth, and $n(\cdot)$ in the local Sobolev space $H^{s+2}_{\mathrm{loc}}(\R)$. In particular, the results include non-smooth and exponentially growing nonlinearities. Our proof is based on a combination of semi-group methods and a new composition result for Besov spaces. In particular, we extend a previous result for Nemytskii operators on Besov spaces on \(\R\) to the periodic setting by using the difference--derivative characterisation of Besov spaces.
\end{abstract}

\maketitle


\section{Introduction}\label{sec:introduction}
We consider nonlinear dispersive equations of the form
\begin{equation}\label{eq:main}
u_{t}+(n(u))_{x}+\mathit{L}u_{x}=0,
\end{equation}
where $n$ denotes the nonlinearity and the linear (dispersive) operator $\mathit{L}$ is defined as a Fourier multiplier operator by
\begin{equation}\label{eq:dis-def}
\mathcal{F}(\mathit{L}f)(\xi)=m(\xi)\mathcal{F}(f)(\xi),
\end{equation}
for some real and measurable function $m$. A large class of equations, including the  Korteweg--de Vries (KdV) \cite{colliander2003sharp} and Benjamin--Ono (BO) equations \cite{ionescu2007global},  are covered by \eqref{eq:main}. But our main inspiration comes from \cite{EGW11}, in which the existence of solitary waves was established  for a class of nonlocal equations of Whitham type \eqref{eq:main}, and energetic stability of solutions was obtained based on an a priori well-posedness assumption. The Whitham equation itself corresponds to the nonlinearity 
$u^{2}$ and nonlocal dispersive operator $L $ with Fourier symbol $\sqrt{\tanh(\xi)/\xi}$,
and is one of several equations of the form \eqref{eq:main} arising in the theory of water waves \cite{lannes2013water}.\\

Equations of the form \eqref{eq:main} are in general not completely integrable, and one has to apply contraction or energy methods to obtain estimates for both the linear and nonlinear terms to prove existence of solutions.  Our linearity will be skew-adjoint (the symbol $m(\xi)$ will be real and even), but the only additional assumption is that \(m\) is of moderate growth, that is, 
\[
|m(\xi)| \lesssim (1+|\xi|)^l,
\]
for some \(l \in \R\) and all \(\xi \in \R\), which is to guarantee that the domain of $L$ is dense in $L_{2}$. Note that this class of symbols is very large, covering  both homogeneous and inhomogeneous symbols. For the same reason scaling argument cannot be applied, and the difficulties increase in determining the critical energy spaces for \eqref{eq:main} and in acquiring the decay of solutions over time needed for the global existence of solutions. Moreover, if  $l$ above is sufficiently negative then the dispersion of $L$ is very weak,  and global well-posedness of \eqref{eq:main} fails in classical energy spaces. In fact, the Whitham equation as a typical representative of \eqref{eq:main} is  locally well-posed in Sobolev spaces $H^{s}$, $s>3/2$, with localized or periodic initial data \cite{raey,KLPS17}, but exhibits finite-time blow-up (wave-breaking) for some sufficiently smooth initial data \cite{0802.35002,MR1668586,vera2015hur} so that  global well-posed in $H^{s}(\R)$, $s>3/2$, is not possible. This kind of break-up phenomena cannot be observed in equations with strong dispersion like KdV and BO, 
{\color{black} which are globally well-posed in $H^{s}(\R)$ for all \(s \geq 1\) (see \cite{colliander2003sharp} and \cite{tao2004global}, respectively).}\\

Our main concern, however, is the nonlinearity $n(\cdot)$. The most well-studied nonlinearities are pure-power type nonlinearities $u^{p}$, $p \in \Z_+$. For a fixed $p$, such nonlinearities have moderate growth rate far away from the origin and it is possible to adjust $p$ so that solutions will exist globally provided that dispersion is not too weak \cite{cascaval2004local}. Otherwise, if we fix the dispersion but allow the nonlinearity to grow fast enough, the solutions may blow up within finite time  \cite{bona1995conservative}. Another important feature of pure power nonlinearities is that they define smooth, regularity-preserving maps on Sobolev spaces $H^{s}$, $s>1/2$, in one dimension so that it is easy to obtain a contraction mapping from the solution operator based on either Duhamel's principle or classical energy estimates. These features, however, fail to hold for general nonlinearities. The nonlinearity in our consideration shall belong to the local Sobolev space $H^{s+2}_{\mathrm{loc}}(\R)$,  $s>3/2$. For such nonlinearities and dispersive operators $L$ as mentioned above, we establish local well-posedness for data in $H^{s}$, $s>3/2$. Due to the locality of the space $H^{s}_{\mathrm{loc}}(\R)$ our result shows that dispersive equations of type \eqref{eq:main} are locally well-posed in high-regularity spaces even for nonlinearities $n$ of arbitrarily fast growth, and for dispersive operators $L$ with arbitrarily weak or strong dispersion (as long as the dispersion is not extreme in the sense that \(L \not\in \mathcal{S}^\prime(\R)\), the space of distributions on Schwartz space). It is a point in our investigation that the nonlinearity $n\in H^{s+2}_{\mathrm{loc}}(\R)$ is not necessarily smooth, and covers standard nonlinearities of the form $u^p$ and $|u|^{p-1}u$ as well as others. Naturally, the regularity of the nonlinearity will affect the energy space, as can be seen in Theorem~\ref{main theorem}.\\

The local space $H^{s+2}_{\mathrm{loc}}(\R)$ first comes into play in obtaining the required commutator estimate for an operator-involving nonlinearity, needed in the semi-group theory. The key for such a commutator estimate in our case are the mapping properties of the nonlinearity $n$ over the energy spaces in consideration, which is guaranteed by a composition theorem  \cite{bourdaud2014composition} when the initial data is in Sobolev spaces on the line.  Namely, a composition operator $T_{f}$ is said to  \emph{act on} a function space $X$ if 
 \[
 T_{f}(g) =f \circ g \subset X,
 \]
 for all $g\in X$. The composition theorem in \cite{bourdaud2014composition} holds that a function $f$ acts on Besov spaces $B_{pq}^{s}(\R)$, $1<p<\infty$, $0<q\leq \infty$, $s>1+1/p$,   if, \emph{and only if}, $f\in B^{s}_{pq,\mathrm{loc}}(\R)$ and $f(0)=0$. That composition theorem, when applied to a non-smooth nonlinearity $n$, guarantees a contraction mapping in the desired solution spaces. We mention here that the Cauchy problem \eqref{eq:main} with a non-smooth nonlinearity is also considered in \cite{cascaval2004local}, in which the author establishes global well-posedness in $H^{\sigma}(\R)$, $\sigma=\max\{\alpha, 3/2+\epsilon\}$ with  $|\alpha|\geq 1$, $\epsilon>0$,  by nonlinear semigroup theory when the dispersion is not too weak ($m(\xi)=|\xi|^{\alpha}$). The nonlinearity in their consideration belongs to the H{\"o}lder space $C^{\alpha+1}(\R)$. Similar well-posedness results in $H^{s}$, $s>1/2$, for generalized KdV equations can be found in \cite{staffilani1997generalized}.\\

In the periodic setting a composition theorem like the one in \cite{bourdaud2014composition} is not available in the literature. However, we prove in Theorem~\ref{periodic composition theorem} that \(f\) acts on the periodic Besov space \(B^{s}_{pq}(\T)\), $1<p<\infty$, $1< q\leq \infty$ and $s>1+\frac{1}{p}$, if, \emph{and only if}, $f\in B^{s}_{pq,\mathrm{loc}}(\R)$, where $\T$ denotes the one-dimensional torus (the requirement that $f(0)=0$ appearing in \cite{bourdaud2014composition} is not necessary due to the periodicity, else the results are comparable). 
Our proof relies on the composition theorem for non-periodic Besov spaces and what can be called a localizing property of periodic Besov spaces (see Lemma \ref{per nonper control}): smooth but compactly supported extensions of periodic functions from a n-dimensional torus $\T^{n}$ to the whole space $\R^{n}$ will be controlled by the latter periodic function norms. A feature of our proof is that we work directly with the difference--derivative characterization of Besov spaces, not the Littlewood--Paley decompositions (used for example in \cite{bourdaud2014composition}). In this case, the main difficulty arises at integer regularity $s \in \N$, where the estimates are particularly cumbersome. It could be mentioned that the composition theorem on \(\R\) mentioned earlier has only been proved in one dimension, even though it is conjectured to hold in higher dimensions too (cf.  \cite{bourdaud2014composition}). Finally, we mention that a related paper for the periodic setting is \cite{hu2013discrete}, in which KdV-type equations  are considered, and which is based on the work \cite{bourgain1995cauchy} of Bourgain.\\

The outline is as follows. In Section \ref{assumptions and main results} we state our assumptions and the main result. In Section \ref{sec:preliminiaries} we reformulate \eqref{eq:main} in terms of the semigroup $e^{-tL\partial_{x}}$ generated by $L$.  Following \cite{0934.35153} and \cite{MR535697} we then introduce Kato's method from \cite{MR0407477}.  The same section concludes with the composition theorem for operators on Besov spaces on the line. In Section \ref{sec:realcase} we give a detailed analysis of the generator of semigroup $e^{-tL\partial_{x}}$  and the proof of our local well-posedness result on the line. Attention needs to be paid in the case of a general dispersive symbol and a nonlinearity of mild regularity. In particular, one finds a proper domain and an equivalent definition for the dispersive operator $L\partial_x$ on which it is closed and generates a continuous semigroup. Finally, in Section \ref{the case of periodic initial data} we prove the above-mentioned composition theorem for Besov spaces on the torus $\T$ (see Theorem~\ref{periodic composition theorem}), this being the key for the well-posedness for periodic initial data. The proof of Theorem~\ref{periodic composition theorem} is based on the localizing property detailed in Lemma \ref{per nonper control}. With all this in place, local well-posedness on $H^{s}(\T)$ is straightforward, as it can be acquired analogously to the non-periodic case, and we only state the idea and point out some key points.

\section{Assumptions and main results}\label{assumptions and main results}
In \eqref{eq:main}, the Fourier transform $\mathcal{F}(f)$ of a function $f$ is defined by the formula 
\begin{equation*}
\mathcal{F}(f)(\xi)=\frac{1}{\sqrt{2\pi}}\int_{\R}e^{-i\xi x}f(x)\, \diff x,
\end{equation*}
extended by duality from the Schwartz space of rapidly decaying smooth functions $\mathcal{S}(\R)$ to the space of tempered distributions $\mathcal{S}^{'}(\R)$. The notation $\hat{f}$ will  be used interchangeably with $\mathcal{F}(f)$. For all \(s\in \R\), we denote by $H^{s}(\R)$ the Sobolev space of tempered distributions whose Fourier transform satisfies 
\begin{equation*}
\int_{\R}(1+|\xi|^{2})^{s}|\mathcal{F}(f)(\xi)|^{2}d\xi <\infty,
\end{equation*}
and equip it with the induced inner product (the exact normalization of the norm will not be important to us). Sobolev spaces are naturally extended to the Besov spaces $B^{s}_{pq}(\R^{n})$, \(s >0\),  \(0 < p,q \leq \infty\), however, we will postpone the exact definition for Besov spaces and introduce them in the periodic setting, since the well-known {\color{black} identity} $H^{s}(\R^{n})=B^{s}_{22}(\R^{n})$, $s>0$, is the only property of Besov spaces that we will use for the well-posedness for localized initial data. Finally, we use $C_{c}^{\infty}$ to denote the $C^{\infty}$ {\color{black} (smooth)} functions with compact support. Note that although the solutions of interest in this paper are all real-valued, and although the operator $L$ defined in \eqref{eq:main} maps real data to real data, the Fourier transform is naturally defined in complex-valued function spaces. Hence, the function spaces used in this investigation should in general be understood as consisting of complex-valued functions. For convenience, we shall sometimes omit the domain in the notation for function spaces, and {\color{black} we use the notation $A\lesssim B$ if there exists a positive contant $c$ such that $A \leq c B$.}\\

Our assumptions for \eqref{eq:main} are stated in Assumption~\ref{main assum}. Note that we have no regularity assumptions for \(m\) except for it being measurable, but the below conditions will guarantee that \(m \in L_{\infty,\mathrm{loc}}\). The growth condition on \(m\) is to guarantee the density of the domain of the linear operator in $L_{2}(\R)$. As what concerns the nonlinearity, the assumption on it is completely local. In particular, \(n(x)\) may grow arbitrarily fast as \(|x| \to \infty\).

\begin{assumption}\label{main assum}
$ $\\[-12pt]
\begin{itemize}
\item[(A1)] The operator $L$ is a symmetric Fourier multiplier, that is,
\begin{equation*}
\mathcal{F}(Lf)(\xi)=m(\xi)\hat{f}(\xi)
\end{equation*} 
for some real and even measurable function $m:\mathbb{R}\to \mathbb{R}$.\\

\item[(A2)] The symbol $m$ is temperate, that is, 
\[
|m(\xi)| \lesssim (1+|\xi|)^l,
\]
for some constant \(l\in\R\) and for all \(\xi \in \R\).\\

\item[(A3)] There exists \(s > \frac{3}{2}\) such that the nonlinearity \(n\) belongs to \(H^{s+2}_{\mathrm{loc}}(\R)\).
\end{itemize}
\end{assumption}
%

{\color{black} Denote by  $\mathbb{T}$ the one-dimensional torus of circumference \(2\pi\).} We now state our main theorem, yielding local existence for a fairly large class of equations. Note in the following that when \(l > s -1\),  the Sobolev space  $H^{s-l-1}$ becomes strictly larger than \(L_{2}\), although our interest mainly arises from the case \(l < 0\).

\begin{theorem}\label{main theorem}
Let $\F\in \{\R,\T\}$. Under Assumption~\ref{main assum} and given $u_{0}\in H^{s}(\F)$ for $s>\frac{3}{2}$, there is a maximal $T>0$ and a unique solution $u$ to \eqref{eq:main} in $C([0,T); H^{s}(\F))\cap C^{1}([0,T);H^{s-\max\{1, {\color{black} l+1}\}}(\F))$. The map $u_{0}\mapsto u(\cdot, u_{0})$ is continuous between the above function spaces.
 \end{theorem}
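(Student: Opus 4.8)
The plan is to reformulate \eqref{eq:main} as an abstract quasilinear evolution equation and apply Kato's semigroup theory, exactly as outlined in the introduction. Writing $A(u) := L\partial_x + \partial_x \circ n'(u)$ (so that $\partial_x(n(u)) = n'(u) u_x$ by the chain rule, at least formally), equation \eqref{eq:main} becomes $u_t + A(u)u = 0$. The first task is to set up the correct functional-analytic framework: choose the pair of spaces $X = H^{s-\max\{1,l+1\}}(\F)$ and $Y = H^s(\F)$, with $S$ an isomorphism $Y \to X$ (e.g. a suitable power of $(1-\partial_x^2)$), and verify Kato's hypotheses. The genuinely delicate point, flagged in the introduction, is that for a general temperate symbol $m$ the operator $L\partial_x$ need not be a nice pseudodifferential operator; one must identify a dense domain on which $L\partial_x$ is closed and generates a $C_0$-group of isometries on $X$ (skew-adjointness comes from $m$ real and even). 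This is done in Section~\ref{sec:realcase} and I would simply invoke it.

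Next I would verify the three Kato-type conditions for $A(u)$ on the ball $W = \{u \in Y : \|u\|_Y \leq R\}$. \emph{Stability}: $A(u)$ generates a quasi-contraction semigroup on $X$ uniformly for $u \in W$; since $L\partial_x$ already does, this reduces to a bounded-perturbation / Lumer--Phillips estimate for $\partial_x \circ n'(u)$, using that $n'(u) \in H^{s+1}_{\mathrm{loc}} \hookrightarrow C^1$ when $u \in H^s$, $s > 3/2$. \emph{Lipschitz dependence}: $\|A(u) - A(v)\|_{Y \to X} \lesssim \|u - v\|_X$, which follows from the mapping $u \mapsto n'(u)$ being Lipschitz from $H^s$ into $H^{s-1}_{\mathrm{loc}}$-type spaces — here is where the composition theorem (Theorem in \cite{bourdaud2014composition} on $\R$, and Theorem~\ref{periodic composition theorem} on $\T$) enters, since $n \in H^{s+2}_{\mathrm{loc}} = B^{s+2}_{22,\mathrm{loc}}$ forces $n'(u) \in H^{s+1}$ locally with good dependence on $u$. \emph{The commutator condition}: $S A(u) S^{-1} - A(u) = B(u)$ must be bounded on $X$ with $\|B(u)\|_{X\to X} \lesssim 1$ for $u \in W$; this is the classical commutator estimate $\|[S, n'(u)\partial_x]S^{-1}\|_{X \to X}$ plus the commutator of $S$ with $L\partial_x$, the latter vanishing since both are Fourier multipliers. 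The nonlinear commutator is controlled by a Kato--Ponce type inequality together with, again, the composition/mapping bound $\|n'(u)\|_{H^{s}_{\mathrm{loc}}}$ localized to the range of $u$.

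Granting these, Kato's theorem (as stated in \cite{MR0407477}, cf. \cite{0934.35153,MR535697}) yields a maximal $T > 0$ and a unique solution $u \in C([0,T);Y) \cap C^1([0,T);X)$ depending continuously on $u_0 \in Y$, which is precisely the assertion with $X = H^{s-\max\{1,l+1\}}(\F)$. The case distinction in the exponent reflects whether the dispersive term ($l+1$ derivatives) or the transport term ($1$ derivative) dominates the loss. The main obstacle, and the part requiring the most care, is the commutator estimate for the nonlinear term under the mere assumption $n \in H^{s+2}_{\mathrm{loc}}$: without smoothness of $n$ one cannot differentiate freely, and the estimate must be squeezed out of the composition theorem — which is exactly why that theorem (and its new periodic counterpart, needed for $\F = \T$) is developed in this paper. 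For $\F = \T$ the argument is formally identical once Theorem~\ref{periodic composition theorem} is available, so I would carry out the details on $\R$ and indicate the periodic modifications at the end.
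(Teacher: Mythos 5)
Your proposal applies Kato's framework directly to the operator $A(u) = L\partial_x + \partial_x\circ n'(u)$ with the pair $X = H^{s-\max\{1,l+1\}}(\F)$, $Y = H^s(\F)$. This is a genuinely different route from the paper, and it has a gap that the paper's route is specifically designed to avoid.

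The paper first conjugates by the unitary group $e^{-tL\partial_x}$ (transformation~\eqref{pre-transf}), producing the non-autonomous equation~\eqref{transed-eq} for $v = e^{tL\partial_x}u$ with operator $A(t,y) = e^{tL\partial_x}\bigl[n'(e^{-tL\partial_x}y)\partial_x\bigr]e^{-tL\partial_x}$ in which the dispersive term $L\partial_x$ has been eliminated entirely. Kato's theorem~\ref{Kato's theorem} is then applied with the fixed pair $X = L_2$, $Y = H^s$, independent of $l$; the quasi-m-accretivity, Lipschitz, and commutator conditions are all checked for a conjugated first-order transport operator on $L_2$ (Lemmata~\ref{lemma:quasi-m}, \ref{lemma:Lipschitz}, \ref{lemma:strongly continuous}). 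Kato's theorem gives $v \in C([0,T);H^s)\cap C^1([0,T);L_2)$, the equation itself upgrades $v_t$ to $H^{s-1}$ because $H^{s-1}$ is an algebra, and only after transforming back to $u$ does the space $H^{s-\max\{1,l+1\}}$ appear — from the formula \eqref{time derivative for u}, $u_t = -e^{-tL\partial_x}L\partial_x v + e^{-tL\partial_x}v_t$, where $L\partial_x v$ costs $l+1$ derivatives. The exponent $\max\{1,l+1\}$ is a \emph{consequence}, not an input to the Kato machinery.

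Your route keeps $L\partial_x$ inside $A(u)$ and feeds $X = H^{s-\max\{1,l+1\}}$ to Kato. This creates two problems. First, $\partial_x\circ n'(u)$ is a first-order operator, unbounded on $X$, so it is not a bounded perturbation of $L\partial_x$; the "bounded-perturbation" half of your stability argument does not work. Lumer--Phillips would require proving quasi-m-accretivity of the full sum $L\partial_x + \partial_x\circ n'(u)$ on $X$ directly. Second, and more seriously, for strong dispersion with $l+1 > s$ the space $X = H^{s-l-1}$ has negative order. Both the dissipativity estimate for $n'(u)\partial_x$ on $X$ and the commutator bound $\bigl[\Lambda^{\max\{1,l+1\}},\, n'(u)\partial_x\bigr]\Lambda^{-\max\{1,l+1\}} \in \mathcal{B}(X)$ then involve fractional commutators in negative-order Sobolev spaces, for which the Kato--Ponce-type estimates you invoke are not available in standard form. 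Since a central point of the paper is to handle \emph{arbitrary} temperate $l$, this regime cannot be set aside. The semigroup conjugation is precisely the device that sidesteps it: after conjugation the operator is independent of $l$, all the energy estimates live in $L_2$ and $H^{s-1}$ where $s>3/2$, and the $l$-dependent loss of regularity only enters at the very end as a harmless observation about $u_t$. Your use of the composition theorem(s) to control $n'(u)$ and the observation that $[\Lambda^\sigma, L\partial_x] = 0$ are both correct and consonant with the paper, but without the conjugation step the scheme does not close for general $l$.
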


{\color{black} 
\begin{remark}
Note that the H{\"o}lder spaces \(C^{k,\alpha} (\mathbb{R})\) are continuously embedded into the local Sobolev spaces \(H_{\mathrm{loc}}^{s} (\mathbb{R})\) for \(k = s\) when \(\alpha = 0\) and for \(k + \alpha  > s\) when \(\alpha \in (0,1)\). Hence,  H{\"o}lder-continuous functions are concrete examples of the nonlinearities considered in this paper. 
\end{remark}

As described in the introduction, our proof of Theorem \ref{main theorem} relies on a combination of Kato's classical energy method, and recent composition theorems for Besov spaces (see Theorems~ \ref{composition theorem} and~\ref{periodic composition theorem}).}

\section{General Preliminaries}\label{sec:preliminiaries}
Consider the transformation 
\begin{equation}\label{pre-transf}
u(t)=e^{-tL\partial_x}v(t),
\end{equation}
where a detailed analysis of the semigroup $e^{-tL\partial_x}$, $t\geq0$, is given in Section \ref{oper L-partial x}.
Substitution of \eqref{pre-transf} into \eqref{eq:main} yields the quasilinear equation
\begin{equation}\label{transed-eq}
\frac{dv}{\, \diff t}+A(t,v)v=0
\end{equation}
for the new unknown $v$, where 
\begin{equation}\label{eq:A}
A(t,y)=e^{tL\partial_x}  [n'(e^{-tL\partial_x}y) \partial_x ]  e^{-tL\partial_x},
\end{equation}
and $n'(e^{-tL\partial_x}y)$ acts by pointwise multiplication. Given a function $y\in H^{\frac{3}{2}+}$ the operator $u \mapsto n'(e^{-tL\partial_x}y)u$ is a bounded linear operator $L_{2} \to L_{2}$. Therefore, $A(t,y)$ is a well-defined bounded linear operator \(H^1 \to L_{2}\). For the purpose of applying Theorem~\ref{Kato's theorem} below, we fix an arbitrary value of \(s > \frac{3}{2}\) and consider spaces
\begin{equation}\label{eq:XY}
X=L_{2}(\mathbb{R}) \qquad\text{ and }\qquad Y=H^{s}(\mathbb{R}),
\end{equation}
between which \(\Lambda^s = (1 - \partial_{x}^2)^{\frac{s}{2}}\) defines a topological isomorphism (isometry, under the appropriate norms).
Let furthermore $B_R$ be the open ball of radius \(R\) in $H^{s}$, for an arbitrary {\color{black} but fixed} radius \(R > 0\). \\

To state Theorem~\ref{Kato's theorem}, let  \(T\) be an operator on a Hilbert space \(H\). We denote the space of all bounded linear operators on \(H\) by \(\mathcal{B}(H)\). Following \cite{Kato95}, we call an operator \(T\) on a Hilbert space $H$ \emph{accretive} if 
\[
\re \langle Tv, v \rangle_{H}\geq 0 
\]
holds for all \(v\in \dom(T)\), and \emph{quasi-accretive} if $T+\alpha$ is accretive for some $\alpha\in \R$. If  $(T+\lambda)^{-1}\in \mathcal{B}(H)$ with 
\[
\|(T+\lambda)^{-1}\|_{\mathcal{B}(H)}\leq (\re \lambda)^{-1} 
\]
for \(\re\lambda>0\), then \(T\) will be called \emph{m-accretive}, and if $T+\alpha$ is m-accretive for some scalar $\alpha\in \mathbb{R}$ it will be called \emph{quasi-m-accretive}.

\begin{theorem}\cite{MR0407477}\label{Kato's theorem} 
Let $X,Y, B_R$ and $\Lambda^s$ be {\color{black} as above}. Consider the quasi-linear Cauchy problem
\begin{equation}\label{general form of equation}
\frac{d u}{d t}+A(u)u= 0,\quad t\geq 0, \quad u(0)=u_{0},
\end{equation}
and assume that\\[-6pt]
\begin{itemize}
\item[(i)] $A(y)\in \mathcal{B}(Y,X)$ for $y\in B_R$, with 
\begin{equation*}
\|(A(y)-A(z))w\|_{X}\lesssim \|y-z\|_{X}\|w\|_{Y},\quad y,z,w \in B_R,
\end{equation*}
and $A(y)$ {\color{black} uniformly} quasi-m-accretive on $B_R$.\\[-6pt]

\item[(ii)] $\Lambda^s A(y)\Lambda^{-s}=A(y)+B(y)$, where $B(y)\in \mathcal{B}(X)$ is uniformly bounded on \(B_R\), {\color{black} and}
\begin{equation*}
\|(B(y)-B(z))w\|_{X} \lesssim \|y-z\|_{Y}\|w\|_{X},\quad y,z\in B_R, \quad w\in X.
\end{equation*}
\end{itemize}
Then, for any given ${\color{black} u_{0}} \in Y$, there is a maximal $T>0$ depending only on $\|u_{0}\|_{Y}$ and a unique solution $u$ to \eqref{general form of equation}  such that 
\begin{equation*}
u=u(\cdot , u_{0})\in C([0,T);Y)\cap C^{1}([0,T);X),
\end{equation*}
where the map $u_{0}\mapsto u(\cdot , u_{0})$ is continuous $Y \to C([0,T);Y)\cap C^{1}([0,T);X)$.
\end{theorem}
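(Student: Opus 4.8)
The statement is Kato's quasilinear existence theorem, and the natural plan is to reduce it to the linear non-autonomous theory via his two-space iteration scheme, with $X = L_2$ the ``low'' space and $Y = H^{s}$ the ``high'' space linked by the isometry $\Lambda^{s}$. First I would isolate the linear building block: fix a curve $y \in C([0,T]; \overline{B_R})$ and study the family $\{A(y(t))\}_{t \in [0,T]}$. By hypothesis (i) each $A(y(t))$ is quasi-m-accretive on $X$ with a constant uniform over $B_R$, so $-A(y(t))$ generates a $C_0$-semigroup on $X$ and, the constant being uniform, the family is \emph{stable} in Kato's sense (uniform bounds on finite products of resolvents). Hypothesis (ii), through $\Lambda^{s} A(y(t))\Lambda^{-s} = A(y(t)) + B(y(t))$ with $B$ uniformly bounded on $X$, exhibits the conjugated family as a bounded perturbation of a stable family, hence itself stable; equivalently $Y$ is admissible for $\{A(y(t))\}$, and the family is stable on $Y$ with constants governed by $\sup_t\|B(y(t))\|_{\mathcal{B}(X)}$. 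Norm-continuity of $t \mapsto A(y(t)) \in \mathcal{B}(Y,X)$ follows from the Lipschitz bound in (i). Kato's linear evolution theorem then produces a unique evolution system $U_y(t,\tau)$, $0 \le \tau \le t \le T$, bounded on $X$ and on $Y$ with norms controlled by the stability constants, and solving $\partial_t U_y(t,\tau)\phi = -A(y(t)) U_y(t,\tau)\phi$ for $\phi \in Y$.

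Next I would run the Picard iteration $u^{(0)} \equiv u_0$, $u^{(k+1)}(t) = U_{u^{(k)}}(t,0)u_0$. The uniform stability bounds yield a time $T > 0$, depending only on $\|u_0\|_Y$ and the structural constants, such that every $u^{(k)}$ remains in $B_R$ on $[0,T]$ with $\sup_k \|u^{(k)}\|_{C([0,T];Y)} < \infty$ --- this is the step in which the existence time is pinned down in terms of $\|u_0\|_Y$ alone. Then I would prove convergence in the \emph{weaker} norm: $w^{(k)} = u^{(k+1)} - u^{(k)}$ solves
\[
\partial_t w^{(k)} + A(u^{(k)})\, w^{(k)} = -\bigl(A(u^{(k)}) - A(u^{(k-1)})\bigr)u^{(k)},
\]
so pairing with $w^{(k)}$ in $X$, using quasi-accretivity of $A(u^{(k)})$ and the Lipschitz estimate (i) (legitimate since $u^{(k)} \in B_R \subset Y$), Gronwall gives $\|w^{(k)}\|_{C([0,T];X)} \lesssim CT\,\|w^{(k-1)}\|_{C([0,T];X)}$, a contraction after shrinking $T$. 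Hence $u^{(k)} \to u$ in $C([0,T];X)$.

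The $Y$-regularity of the limit is reinstated by interpolation: uniform boundedness in $C([0,T];Y)$ together with convergence in $C([0,T];X)$ yields convergence in every intermediate space, $u(t) \in Y$ with $\|u(t)\|_Y \le \liminf_k \|u^{(k)}(t)\|_Y$, and this suffices to pass to the limit in the Duhamel (integral) form of the equation, so that $u$ is a solution. One then upgrades $t \mapsto u(t)$ from weak to strong continuity into $Y$ (via the reverse triangle inequality combined with one-sided continuity of $t \mapsto \|u(t)\|_Y$, itself a consequence of the evolution-operator bounds), and reads off $u \in C^1([0,T];X)$ directly from the equation. Uniqueness and continuous dependence come from the same $X$-energy estimate applied to two solutions $u, \tilde u$: $v = u - \tilde u$ satisfies $\partial_t v + A(u)v = -(A(u) - A(\tilde u))\tilde u$, whence $\|v(t)\|_X \le e^{Ct}\|u_0 - \tilde u_0\|_X$; uniqueness is the case of equal data, and promoting continuous dependence to the topology of $C([0,T];Y) \cap C^1([0,T];X)$ requires an additional Bona--Smith type mollification argument exploiting the uniform $Y$-bounds. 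Finally, continuing the solution as long as $\|u(t)\|_Y$ stays bounded yields a maximal $T$.

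I expect the main obstacle to be the loss of derivative: $A(y)$ maps $Y$ into $X$ but not $Y$ into $Y$, so no contraction can be run directly at the level of $Y$. Hypothesis (ii) is precisely the device that closes the scheme --- it guarantees that the conjugated generators $\Lambda^{s} A(y)\Lambda^{-s}$ form a stable family, hence that the linear evolution operators $U_y(t,\tau)$ preserve $Y$ with uniform bounds; convergence is then extracted in the weak space $X$ and the regularity recovered by the interpolation and weak-continuity bootstrap. Carrying out the stability of $\{\Lambda^{s} A(y)\Lambda^{-s}\}$, the construction and $Y$-boundedness of the evolution system, and the strong-continuity upgrade is the technically heaviest part.
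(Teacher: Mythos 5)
The paper does not prove this theorem --- it is cited directly from Kato's 1975 paper \cite{MR0407477} and then applied as a black box in Section 4 --- so there is no in-paper proof against which to compare your argument. Your sketch is a faithful reconstruction of Kato's own proof strategy: freeze a curve $y \in C([0,T];\overline{B_R})$ to get a non-autonomous linear problem; derive stability of $\{A(y(t))\}$ on $X$ from the uniform quasi-m-accretivity; use hypothesis (ii) to exhibit $\{\Lambda^s A(y(t))\Lambda^{-s}\}$ as a uniformly bounded perturbation of a stable family, hence stable on $X$, which is precisely the admissibility of $Y$; invoke Kato's linear hyperbolic evolution theorem to get the evolution system $U_y(t,\tau)$ with uniform bounds on both $X$ and $Y$; run the Picard iteration $u^{(k+1)}(t)=U_{u^{(k)}}(t,0)u_0$ with contraction in the weak norm of $X$; and recover $C([0,T);Y)$-regularity of the limit from the uniform $Y$-bounds via weak compactness and the norm-continuity bootstrap. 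You correctly identify hypothesis (ii) as the device that neutralises the loss of one derivative in the high-regularity space, which is indeed the crux.

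One small remark for accuracy of attribution: Kato's original continuous-dependence argument in \cite{MR0407477} does not go through a Bona--Smith mollification of the data. He instead exploits that $Y=H^s$ is a Hilbert space (uniformly convex): the iterates converge weakly in $Y$ by boundedness, one shows that $\|u^n(t)\|_Y$ converges to $\|u(t)\|_Y$ uniformly using the evolution-operator estimates, and weak convergence plus convergence of norms then upgrades to strong convergence in $Y$. Bona--Smith regularisation is a contemporaneous and equally valid alternative used in many later expositions, but it is a genuinely different route. Apart from this attribution detail, your outline is correct and complete at the level expected for a cited theorem whose full proof belongs to Kato's paper.
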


The continuity of the operators $A$ and $B$ in Theorem \ref{Kato's theorem} can be reduced to a commutator estimate. This is where our composition theorem will play a role in order to control the nonlinearity $n$. To this aim, we introduce the concept of \emph{action} of a composition operator.

\begin{definition}[{\color{black} Action} property]
For a function $f$, let $T_{f}$ denote the composition operator \(\color{black} g\mapsto f \circ g\). The operator $T_{f}$ is said to \emph{act on} a function space \(W\) if for any $g\in W$ one has $T_{f}(g)\in W$, that is,
\begin{equation*}
T_{f} W\subset W.
\end{equation*}
\end{definition}

For some Besov spaces over \(\R\), {\color{black} the set of acting functions has been} completely characterised in terms of a local space, as described in the following result. 

\begin{theorem}\cite{bourdaud2014composition} \label{composition theorem}
Let $1<p<\infty$, $0< q\leq \infty$ and $s>1+(1/p)$. For a Borel measurable function $f:\mathbb{R}\to\mathbb{R}$, the composition operator $T_{f}$ acts on $B^{s}_{pq}(\R)$ exactly when $f(0)=0$ and $f\in B^{s}_{pq,\mathrm{loc}}(\mathbb{R})$. In that case, $T_{f}$ is {\color{black} bounded}. 
\end{theorem}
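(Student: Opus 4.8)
The plan is to prove the two implications separately, the ``only if'' part being soft and the ``if'' part carrying all the work.

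\emph{Necessity, and automatic boundedness.} Both conditions on $f$ are forced by testing $T_{f}$ against simple inputs. Taking $g\equiv 0\in B^{s}_{pq}(\R)$ shows that the constant function equal to $f(0)$ lies in $B^{s}_{pq}(\R)$; since $\|\cdot\|_{B^{s}_{pq}}\gtrsim\|\cdot\|_{L_{p}}$ and $s>0$, no nonzero constant belongs to this space, hence $f(0)=0$. For the local condition, fix $R>0$ and choose $\phi\in C_{c}^{\infty}(\R)$ with $\phi(x)=x$ on $[-R,R]$; then $\phi\in B^{s}_{pq}(\R)$, so $f\circ\phi\in B^{s}_{pq}(\R)$, and since $f\circ\phi=f$ on $(-R,R)$ we get $f\in B^{s}_{pq}\big((-R,R)\big)$. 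Letting $R\to\infty$ gives $f\in B^{s}_{pq,\mathrm{loc}}(\R)$. The boundedness of $T_{f}$ in the action case I would simply read off from the quantitative estimate obtained in the sufficiency direction (alternatively, from a Baire-category argument for nonlinear maps), so it needs no separate treatment.

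\emph{Sufficiency: reduction to compactly supported $f$.} Assume $f(0)=0$ and $f\in B^{s}_{pq,\mathrm{loc}}(\R)$, and fix $g\in B^{s}_{pq}(\R)$ with $\|g\|_{B^{s}_{pq}}\le\rho$. Since $s>1+\tfrac1p$, the Besov embedding $B^{s}_{pq}(\R)\hookrightarrow C^{1}(\R)\cap L_{\infty}(\R)$ holds — this is precisely where the hypothesis $s>1+\tfrac1p$ is used — so $\|g\|_{L_{\infty}}+\|g'\|_{L_{\infty}}\lesssim\rho$ and the range of $g$ lies in some $[-M,M]$ with $M\lesssim\rho$. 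Only the values of $f$ on $[-M,M]$ are seen by $f\circ g$, so I would replace $f$ by $\tilde f:=\chi f$ with $\chi\in C_{c}^{\infty}(\R)$ equal to $1$ on $[-2M,2M]$; then $\tilde f\in B^{s}_{pq}(\R)$ (by the definition of the local space), $\tilde f$ has compact support, $\tilde f(0)=0$, $\tilde f\circ g=f\circ g$, and $\|\tilde f\|_{B^{s}_{pq}}$ is controlled in terms of $\rho$ and the relevant local norm of $f$. As $\tilde f\in B^{s}_{pq}(\R)\cap C^{1}$ with $\tilde f(0)=0$, already $\|\tilde f\circ g\|_{L_{p}}\le\|\tilde f'\|_{L_{\infty}}\|g\|_{L_{p}}<\infty$, and it remains to control the homogeneous part of $\|\tilde f\circ g\|_{B^{s}_{pq}}$.

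\emph{Sufficiency: the main estimate.} For this I would Littlewood--Paley decompose $\tilde f=\sum_{j\ge0}\tilde f_{j}$, so each $\tilde f_{j}$ is smooth and band-limited with $\|\tilde f_{j}^{(k)}\|_{L_{\infty}}\lesssim 2^{j(k+1/p-s)}a_{j}$, where $a_{j}:=2^{js}\|\tilde f_{j}\|_{L_{p}}$ and $\|(a_{j})_{j}\|_{\ell^{q}}\lesssim\|\tilde f\|_{B^{s}_{pq}}$, and then estimate $\|\tilde f_{j}\circ g\|_{B^{s}_{pq}}$ through the difference--derivative characterisation of $B^{s}_{pq}$. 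Differentiating $\lfloor s\rfloor$ times and applying the Fa\`a di Bruno formula expresses $(\tilde f_{j}\circ g)^{(\lfloor s\rfloor)}$ as a finite sum of terms $(\tilde f_{j}^{(r)}\circ g)\prod_{i}g^{(m_{i})}$ with $\sum_{i}m_{i}=\lfloor s\rfloor$; here every factor $g^{(m)}$ with $m\le\lfloor s\rfloor-1$ lies in $L_{p}\cap L_{\infty}$ and $g^{(\lfloor s\rfloor)}\in L_{p}$, so one distributes $L_{p}$ and $L_{\infty}$ norms over the product (with $\tilde f_{j}^{(r)}\circ g$ always in $L_{\infty}$) and is left with the fractional difference of order $\sigma=s-\lfloor s\rfloor$. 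The delicate point is that a crude bound such as $\|\Delta_{h}(\tilde f_{j}^{(r)}\circ g)\|_{L_{\infty}}\le\|\tilde f_{j}^{(r+1)}\|_{L_{\infty}}\|g'\|_{L_{\infty}}|h|$ is lossy, producing a spurious factor $2^{j/p}$ after integration in $h$; it must be interpolated against the trivial bound $2\|\tilde f_{j}^{(r)}\|_{L_{\infty}}$ and, crucially, the pieces have to be resummed over $j$ before taking the $\ell^{q}$ norm, i.e.\ the whole computation must be run inside Bony's paraproduct calculus (equivalently, the paracomposition of $\tilde f$ by $g$) rather than block by block. Done this way one obtains $\|\tilde f\circ g\|_{B^{s}_{pq}}\lesssim\|\tilde f\|_{B^{s}_{pq}}\,P(\rho)$ for a polynomial $P$, which also gives the asserted boundedness of $T_{f}$.

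\emph{The main obstacle.} The hard part is precisely this resummation: controlling $\sum_{j}\tilde f_{j}\circ g$ without losing derivatives forces one to exploit the frequency localisation of $g$ (through the paracomposition operator) instead of estimating each composed block separately. A second, notoriously technical difficulty is the integer case $s\in\N$, where the characterisation requires second-order differences of the top derivative; expanding $\Delta_{h}^{2}$ of a product then creates mixed terms in which the difference falls on two factors at once, and these need a two-step interpolation to be absorbed — the same phenomenon that makes the integer case delicate in the periodic analogue proved later in the paper. Once the right decomposition is fixed, the embeddings, the Fa\`a di Bruno bookkeeping, and the geometric summation in $j$ are routine.
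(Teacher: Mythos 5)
This theorem is cited, not proved, in the paper: it is a result of Bourdaud, Moussai and Sickel \cite{bourdaud2014composition}, imported as a black box. The only commentary the paper offers is the informal necessity remark following the statement and Remark~\ref{norm control for compsosition}, which records the quantitative bound \eqref{composition bound} extracted from the cited proof. So there is no in-paper proof to compare your proposal against.

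Against the cited reference: your necessity argument is standard and matches the paper's remark. On sufficiency, note that \cite{bourdaud2014composition} does not dyadically decompose $f$, as you do; it telescopes in $g$, writing $f\circ g = f\circ S_0 g + \sum_{j\ge 0}\bigl(f\circ S_{j+1}g - f\circ S_j g\bigr)$ and estimating each increment --- paralinearization in the Meyer/Bony style. The difficulty you flag (resummation over $j$ cannot be done block by block) is a symptom of decomposing the wrong function: the increments $f\circ S_{j+1}g - f\circ S_j g$ are frequency-localised near $2^j$ because $g$ is, and that localisation is what kills the lossy $2^{j/p}$ factor; the blocks $\tilde f_j\circ g$ are not localised in this way, so a genuine paracomposition lemma in the sense of Alinhac is needed to repair this, which your sketch leaves as an unproved claim. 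Either route can be made to work, but as written the crucial resummation is a wish rather than an estimate; you should either switch to the telescoping-in-$g$ decomposition or state and prove the paracomposition estimate you invoke. Also check that your final bound matches the form \eqref{composition bound} with $\|(f\varphi_a)'\|_{B^{s-1}_{pq}}$, since the paper uses that specific expression later.

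Finally, when the paper proves a composition theorem of its own (Theorem~\ref{periodic composition theorem}, the periodic analogue), it deliberately avoids Littlewood--Paley altogether, working instead with the difference--derivative characterisation of the Besov norm via the localizing property of Lemma~\ref{per nonper control}; the integer case $s\in\N$ is handled through the second-difference identity \eqref{2nd difference} --- exactly the phenomenon you correctly identify at the end as the delicate point.
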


It is obvious that for spaces of functions with decay the condition \(f(0) = 0\) is necessary. It is clear, too, that \(f\) necessarily must have the same (local) regularity as prescribed by the space \(B^s_{pq}\), since otherwise a smoothened cut-off of the function \(x \mapsto x\) would be mapped out of the space by \(f\). What is less obvious is that these two properties are actually equivalent to the action property; and in the one-dimensional case in fact to boundedness of \(f\) on \(B^s_{pq}(\R)\), see \cite{bourdaud2014composition}. We record here also the following definition and consequence of Theorem~\ref{composition theorem}.

{\color{black}
\begin{remark}\label{norm control for compsosition} Let $\phi\in C_{c}^{\infty}(\R)$ be a smooth cut-off function such that $0 \leq \phi \leq 1$,
\begin{equation}
\phi(x)=1\quad \mathrm{for} \quad|x|\leq 1,
\end{equation}
and $\supp(\phi) \subset [-2,2]$. For $a>0$, we denote by 
\[
\varphi_{a}(x) =\phi \left(\frac{x}{a} \right)
\] 
the \(a^{-1}\)-dilation of $\phi$. Then the following inequality is a consequence of the proof of Theorem~\ref{composition theorem} in  \cite{bourdaud2014composition}. For \(a = \|g\|_{L_{\infty}}\), one has
\begin{equation}\label{composition bound}
\|f(g)\|_{B^{s}_{pq}(\R)}\lesssim \|(f\varphi_{a})'\|_{B^{s-1}_{pq}(\R)}
\|g\|_{B^{s}_{pq}(\R)}(1+\|g\|_{B^{s}_{pq}(\R)})^{s-1-\frac{1}{p}}.
\end{equation}
\end{remark}

We are now ready to move on to the existence result on \(\R\).}

\section{Localised initial data}\label{sec:realcase}

\subsection{The operator \(L\partial_x\)}\label{oper L-partial x}
{\color{black} Most of the material in this subsection is standard, and we present it in condensed form. Often, though, in the literature details are only given in the case when the assumptions on \(L\) are much more restrictive and \(n\) is a pure power nonlinearity.} For a classical paper on well-posedness of nonlinear dispersive equations, see, e.g., \cite{ABDELOUHAB1989360}. Here, we follow the route of \cite{0934.35153} and start by defining the domain of  $\partial_x L = L \partial_x$ in {\color{black} $ L_2(\R)$ (from now on only \(L_2\))} by
\begin{equation}\label{dom 1}
\dom(L\partial_x)=\{f\in L_{2}\colon L\partial_x f\in L_{2}\}.
\end{equation}

\begin{lemma}
Let $S$ be the set of all $f\in L_{2}$ for which there exists  $g\in L_{2}$ with
\begin{equation}\label{dom via integral}
\langle\cdot, g \rangle_{L_{2}} = -\langle L\partial_x \,\cdot,  f \rangle_{L_{2}}
\end{equation}
Then $\mathrm{dom}(L\partial_x )=S$ and \(L\partial_x  = [f \mapsto g]\). 
\end{lemma}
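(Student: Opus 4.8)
The plan is to prove the two set-theoretic inclusions $\dom(L\partial_x)\subseteq S$ and $S\subseteq\dom(L\partial_x)$, and in each case to read off that the companion function $g$ is exactly $L\partial_x f$. Everything is carried out on the Fourier side, the only structural input being that the symbol of $L\partial_x$, namely $\mu(\xi):=\i\xi\,m(\xi)$, is purely imaginary because $m$ is real and measurable, so that $\overline{\mu}=-\mu$ pointwise; this is the (formal) skew-symmetry of $L\partial_x$. Before the two inclusions I would record two preliminary observations. First, by Assumption~\ref{main assum}(A2) the function $\mu$ is measurable, locally bounded and of temperate growth, hence for every $f\in L_2$ the product $\mu\hat f$ is a genuine measurable function lying in a weighted $L_2$-space and in particular in $\mathcal S^\prime(\R)$ — this is exactly what makes \eqref{dom 1} meaningful. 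Second, for $\varphi\in\mathcal S(\R)$ one has $\mu\hat\varphi\in L_1\cap L_2$, so $\mathcal S(\R)\subset\dom(L\partial_x)$, and $\mathcal F(\mathcal S(\R))=\mathcal S(\R)$ is dense in $L_2$.

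For $\dom(L\partial_x)\subseteq S$: given $f\in\dom(L\partial_x)$, I set $g:=L\partial_x f\in L_2$, so that $\hat g=\mu\hat f\in L_2$. For any admissible test function $h$, Parseval gives $\langle h,g\rangle_{L_2}=\langle\hat h,\mu\hat f\rangle_{L_2}=\int\hat h\,\overline{\mu\hat f}$; using $\overline\mu=-\mu$ and reassociating the pointwise product, this equals $-\int\hat h\,\mu\,\overline{\hat f}=-\int(\mu\hat h)\,\overline{\hat f}=-\langle\mu\hat h,\hat f\rangle_{L_2}=-\langle L\partial_x h,f\rangle_{L_2}$, every integral converging because each factor lies in $L_2$. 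Hence $f\in S$ with associated element $g=L\partial_x f$.

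For $S\subseteq\dom(L\partial_x)$: let $f\in S$ with companion $g\in L_2$, and specialise \eqref{dom via integral} to $h=\varphi\in\mathcal S(\R)$. Transferring to the Fourier side as above yields $\int\hat\varphi\,\overline{\hat g}=-\int\mu\hat\varphi\,\overline{\hat f}$ for all $\varphi\in\mathcal S(\R)$. Taking complex conjugates, letting $\chi:=\overline{\hat\varphi}$ range over all of $\mathcal S(\R)$ and again using $\overline\mu=-\mu$, this rearranges to $\int\chi\,\hat g=\int\chi\,(\mu\hat f)$ for every $\chi\in\mathcal S(\R)$; that is, the locally integrable functions $\hat g$ and $\mu\hat f$ agree as tempered distributions, hence almost everywhere. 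Since $\hat g\in L_2$ this shows $\mu\hat f\in L_2$, so by definition $f\in\dom(L\partial_x)$ and $L\partial_x f=\mathcal F^{-1}(\mu\hat f)=g$.

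I do not expect a genuine obstacle: the two computations are routine, and the statement amounts to saying that the Fourier multiplier with purely imaginary temperate symbol is its own negative adjoint. The two points that need care are the bookkeeping of complex conjugates — so that the sign in $\overline\mu=-\mu$ is applied correctly and actually converts the ``maximal'' definition \eqref{dom 1} into the ``weak'' one \eqref{dom via integral} — and checking that every integral and distributional pairing written down converges, which is precisely where the temperate-growth assumption (A2) and the resulting local boundedness of $m$ enter.
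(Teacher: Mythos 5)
Your proof is correct, and it is written in a genuinely different (though morally parallel) register from the paper's. The paper runs the argument in physical space: it invokes the symmetry of $L$ directly to get $\langle\phi, L\partial_x f\rangle_{L_2} = -\langle L\partial_x\phi, f\rangle_{L_2}$ for $\phi\in C_c^\infty$, and for the reverse inclusion observes that $\langle L\partial_xf,\cdot\rangle = -\int f(L\partial_x\cdot)$ is already a well-defined distribution, so $L\partial_x f - g$ vanishes in $\mathcal{D}'(\R)$ and hence $L\partial_x f = g\in L_2$. You instead pass to the Fourier side, where the skew-symmetry becomes the transparent pointwise identity $\overline{\mu}=-\mu$ for $\mu=\i\xi m(\xi)$, and the second inclusion reduces to identifying the two locally integrable functions $\hat g$ and $\mu\hat f$ against Schwartz test functions $\chi=\overline{\hat\varphi}$. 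What your version buys is that the convergences are checked in the open (each pairing is between two honest $L_2$ or weighted-$L_2$ functions, and the role of (A2) in making $\mu\hat f$ a temperate distribution is explicit), whereas the paper leaves these points implicit. What it loses is a slight detour through Parseval and conjugation bookkeeping, plus one point worth tightening: in the first inclusion you write ``for any admissible test function $h$'' — you should pin this down (e.g.\ $h\in\mathcal{S}(\R)$, which you have already verified lies in $\dom(L\partial_x)$), since that is the class in which the pairing $\langle L\partial_x h, f\rangle_{L_2}$ is a priori defined and also the class over which \eqref{dom via integral} is tested in the second inclusion. With that small clarification, the argument is complete and matches the paper's conclusion.
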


\begin{proof}
Since \(L\) is symmetric, for any $f\in \mathrm{dom}(L\partial_x)$ and any \(\phi \in C_c^\infty(\R)\) , we have
\[
\color{black}
\langle \phi, L\partial_xf \rangle_{L_{2}} = -\langle L\partial_x \phi, f \rangle_{L_{2}}.
\]
Thus $L\partial_xf\in L_{2}$ yields $\mathrm{dom}(L\partial_x)\subset S$. To see that $L\partial_xf = g$, note that
\begin{equation*}
\color{black}
\begin{split}
&\langle \phi, g  \rangle_{L_{2}} =- \langle  L\partial_x\phi, f \rangle_{L_{2}}=\ \langle L\partial_xf, \phi \rangle_{L_{2}},
\end{split}
\end{equation*}
for any \(f \in \dom (L\partial_x)\), {\color{black} by the skew-symmetry of \(L\partial_x\).} Thus, if {\color{black} we knew that} $S\subset \mathrm{dom}(L\partial_x)$, we could conclude $L\partial_x=[f \mapsto g]$. For $f$ in \(S\),  \(\langle L\partial_xf,\cdot\rangle=-\int_{\R}f(L\partial_x\cdot)\, \diff x\) is {\color{black} clearly} a well-defined distribution. In view of \eqref{dom via integral} we have \(\langle L\partial_xf-g,\phi\rangle=0\), and therefore
\begin{equation}
L\partial_xf=g  \quad \mathrm{in} \quad\mathcal{D}^\prime(\R).
\end{equation}
Since $g\in L_{2}$ we deduce that $L\partial_xf\in L_{2}$, which in turn implies  $f\in \mathrm{dom}(L\partial_x)$. This concludes the proof.
\end{proof}

We record the following properties of  $L\partial_x$. 
\begin{lemma}\label{prop LD}
$L\partial_x$ is densely defined on $L_{2}$, closed, and skew-adjoint on \(S\).
\end{lemma}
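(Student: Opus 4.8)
The plan is to read off the three properties from the preceding lemma, which identifies $L\partial_x$ with $-T_0^{*}$ for $T_0:=L\partial_x|_{C_c^\infty(\R)}$, supplemented by a single Plancherel computation in which the only structural input is the reality of the symbol $m$. Throughout, Assumption~\ref{main assum}\,(A2) is used to interpret $L\partial_x f$ for $f\in L_2$ as the tempered distribution $\mathcal{F}^{-1}(i\xi\,m\,\hat f)$: since $|i\xi m(\xi)|\lesssim(1+|\xi|)^{l+1}$ and $m\in L_{\infty,\mathrm{loc}}$, the product $i\xi m(\xi)\hat f(\xi)$ is a well-defined element of $\mathcal{S}'(\R)$.

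For density, I would first check that $C_c^\infty(\R)\subset\dom(L\partial_x)$: for $\phi\in C_c^\infty(\R)$ one has $\widehat{L\partial_x\phi}(\xi)=i\xi m(\xi)\hat\phi(\xi)$, which is rapidly decreasing because $\hat\phi\in\mathcal{S}(\R)$ and $i\xi m$ has at most polynomial growth; in particular it lies in $L_2$. So $\dom(L\partial_x)$ contains the dense subspace $C_c^\infty(\R)$. For closedness, if $f_n\to f$ and $L\partial_x f_n\to g$ in $L_2$, then pairing against $\phi\in C_c^\infty(\R)$ and using \eqref{dom via integral} for each $f_n$ gives $\langle g,\phi\rangle_{L_2}=-\lim_n\langle f_n,L\partial_x\phi\rangle_{L_2}=-\langle f,L\partial_x\phi\rangle_{L_2}$, which is legitimate since $L\partial_x\phi\in L_2$. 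Hence $f,g$ satisfy \eqref{dom via integral}, so $f\in S=\dom(L\partial_x)$ with $L\partial_x f=g$ by the preceding lemma; equivalently $L\partial_x=-T_0^{*}$ is an adjoint, hence closed.

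For skew-adjointness I would establish the two inclusions separately. Skew-symmetry $-L\partial_x\subseteq(L\partial_x)^{*}$ I would verify on the Fourier side: for $f,g\in\dom(L\partial_x)$ the transforms $i\xi m\,\hat f$ and $i\xi m\,\hat g$ both lie in $L_2$, and since $i\xi m(\xi)$ is purely imaginary---here the reality of $m$ enters---multiplication by it is a skew-symmetric operator on $L_2$, so by Plancherel $\langle L\partial_x f,g\rangle_{L_2}=-\langle f,L\partial_x g\rangle_{L_2}$. For the converse $(L\partial_x)^{*}\subseteq-L\partial_x$, I would use that $T_0$ is a restriction of $L\partial_x$, so $\dom\big((L\partial_x)^{*}\big)\subseteq\dom(T_0^{*})=S=\dom(L\partial_x)$ by the preceding lemma; combined with the skew-symmetry just obtained this forces $(L\partial_x)^{*}=-L\partial_x$ on $S$.

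The computations here are entirely routine. The one point deserving attention is the standing interpretation of $L\partial_x$ on $L_2$ as a Fourier multiplier with the merely measurable, temperate symbol $i\xi m(\xi)$---precisely what Assumption~\ref{main assum}\,(A2) secures---together with the convenient fact that the domain of $L\partial_x$ is pinned down by an identity tested only against $C_c^\infty(\R)$, which is exactly why the adjoint's domain lands inside $\dom(L\partial_x)$ and no separate graph-norm (core) argument for $C_c^\infty(\R)$ is required.
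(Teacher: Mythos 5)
Your proof is correct, but it takes a genuinely different route from the paper's for the skew-adjointness claim, and the difference is worth noting. The paper establishes the inclusion $(L\partial_x)^{*}\subset -L\partial_x$ first by testing against $C_c^\infty$, and then proves the opposite inclusion $-L\partial_x\subset (L\partial_x)^{*}$ via a mollification argument: it shows that $C^\infty\cap L_2$ is dense in $\dom(L\partial_x)$ in the graph norm (using $f_n=f\ast\varrho_n$ and the fact that $L\partial_x$ commutes with convolution), and then passes to the limit. You avoid the mollification machinery altogether: you verify the skew-symmetry $\langle L\partial_x f,g\rangle_{L_2}=-\langle f,L\partial_x g\rangle_{L_2}$ directly by Plancherel for $f,g\in\dom(L\partial_x)$, which works because on the Fourier side $L\partial_x$ is just multiplication by the purely imaginary function $i\xi m(\xi)$ and both $i\xi m\,\hat f$ and $i\xi m\,\hat g$ are in $L_2$, so the relevant integrals converge absolutely by Cauchy--Schwarz. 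The converse inclusion you obtain by the adjoint-reversal observation $T_0\subset L\partial_x\Rightarrow(L\partial_x)^*\subset T_0^*=-L\partial_x$, which is a clean and correct way to read the preceding lemma. Your closedness argument is also cleaner than what the paper gestures at (which is essentially ``closedness follows from $L$ being a symmetric Fourier multiplier''): you identify $L\partial_x=-T_0^*$ and note that adjoints are always closed. In short, your route buys a shorter, more transparent argument; the paper's route buys the additional information that $C^\infty\cap L_2$ is a core for $L\partial_x$, which is not needed for the lemma's statement. Both are valid. The one place where you should tighten the wording is the density step: since $m$ is only assumed measurable, $i\xi m\hat\phi$ is not literally a Schwartz function, but it is bounded by $(1+|\xi|)^{l+1}|\hat\phi(\xi)|$ and therefore lies in $L_2$, which is all you need.
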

\begin{proof}
The denseness of \(\dom(L\partial_x)\) in \(L_{2}\) follows by (A2) (see Assumption~\ref{main assum}). Similarly, closedness in \(L_{2}\) follows from that \(L\) is a symmetric Fourier multiplier operator, cf. (A1). {\color{black} Now, let} $(L\partial_x)^{*}$ be the  \(L_{2}\)-adjoint of $L\partial_x$. Then, for any $g\in \mathrm{dom}((L\partial_x)^{*})$ and any $\phi {\color{black} \in} C_{c}^{\infty} \subset \dom(L\partial_x)$, we have
\[
\langle \phi, (L\partial_x)^{*}g\rangle_{L_{2}} =\langle L\partial_x\phi,  g \rangle_{L_{2}},
\]
which implies that $g\in \mathrm{dom}(-L\partial_x)$ and $(L\partial_x)^{*}\subset -L\partial_x$. To prove the inverse relation, we use that for any $f\in \mathrm{dom}(L\partial_x)$ there is a sequence $\{f_{n}\}\) of smooth functions such that 
\begin{equation}\label{appro seq ld}
f_{n}\stackrel{L_{2}}{\to} f \quad \mathrm{and} \quad (L\partial_x)f_{n} \stackrel{L_{2}}{\to} (L\partial_x)f\quad,
\end{equation}
as \(n \to \infty\). Here $f_{n} = f \ast \varrho_{n}$, where $\varrho_{n}(x) =n\varrho(nx)$ and $\varrho\in C_{c}^{\infty}$ is a mollifier satisfying $\varrho(x)\geq 0$ and $\int_{\mathbb{R}}\varrho\, \, \diff x=1$.  Since \(\varrho_n \in C_{c}^{\infty}\), we have  $f_{n}\in C^{\infty}\cap L_{2}$ and clearly $f_{n}\to f$ in $L_{2}$ as $n\to \infty$. Furthermore,
\begin{equation}\label{conv L1}
\begin{split}
\mathcal{F}(L\partial_xf_{n})=i\xi m(\xi)\mathcal{F}(f)\mathcal{F}(\varrho_{k})=\mathcal{F}(f)\mathcal{F}(L\partial_x\varrho_{n})=\mathcal{F}(f*(L\partial_x\varrho_{n}))
\end{split}
\end{equation}
and because $L\partial_x$ is a Fourier multiplier we also have that
\begin{equation}\label{conv L2}
f*(L\partial_x\varrho_{n}) = (L\partial_xf)*\varrho_{n}.
\end{equation}
By \eqref{conv L1} and  \eqref{conv L2}, one has
\begin{equation*}
\begin{split}
\|{\color{black} L\partial_x} f_{n}-L\partial_xf\|_{L_{2}} &=\|(L\partial_xf)*\varrho_{n}-L\partial_xf\|_{L_{2}}\to 0\quad \text{as}\quad n\to \infty,
\end{split}
\end{equation*}
which establishes the density of $C^{\infty}\cap L_{2}$ in the graph norm of \(L\partial_x\) on $\mathrm{dom}(L\partial_x)$.
For each $\color{black} g\in \mathrm{dom}(L\partial_x)$, we can find $\{g_{n}\}\subset C^{\infty}\cap L_{2}$. Then, for any $\color{black}  f\in \mathrm{dom}(L\partial_x)$ we have
\begin{equation*}
\begin{split}
 \langle L\partial_xf, g \rangle_{L_{2}} =\lim_{n\to \infty} \langle L\partial_xf, g_{n} \rangle_{L_{2}} =-\lim_{n\to \infty}\langle f, L\partial_xg_{n} \rangle_{L_{2}} =\langle f, -L\partial_x g \rangle_{L_{2}}.
\end{split}
\end{equation*}
Therefore, $-L\partial_x\subset (L\partial_x)^{*}$, and the operator $L\partial_x$ is skew-adjoint on \(S\).
\end{proof}

From lemma \ref{prop LD} and Stone's theorem one {\color{black} then obtains} the following standard result.\\

\begin{lemma}\label{fourier LD}
$L\partial_x$ generates a unitary group $\{e^{-tL\partial_x}\}$ on $H^{s}$, $s\geq 0$, where
\[
\mathcal{F}(e^{-tL\partial_x}f)=e^{i t\xi m(\xi)}\mathcal{F}(f), \qquad f\in L_{2}.
\]
\end{lemma}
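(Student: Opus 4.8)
The plan is to deduce Lemma~\ref{fourier LD} from the Hille--Yosida--Stone circle of ideas, using that Lemma~\ref{prop LD} has already identified \(L\partial_x\) as a densely defined, closed, skew-adjoint operator on \(L_2\). First I would recall Stone's theorem: a densely defined operator \(T\) on a Hilbert space is skew-adjoint (i.e.\ \(iT\) is self-adjoint) if and only if \(T\) generates a strongly continuous unitary group \(\{e^{tT}\}_{t\in\R}\). Applying this with \(T = -L\partial_x\) (which is skew-adjoint on \(S\) by Lemma~\ref{prop LD}, since the negative of a skew-adjoint operator is skew-adjoint), we obtain a strongly continuous one-parameter unitary group \(\{e^{-tL\partial_x}\}_{t\in\R}\) on \(L_2\), and in particular each \(e^{-tL\partial_x}\) is an isometry of \(L_2\).

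Next I would identify the group explicitly via the Fourier transform. Since \(\mathcal{F}\) is a unitary isomorphism of \(L_2(\R)\) onto \(L_2(\R)\) and \(\mathcal{F}(L\partial_x f)(\xi) = i\xi m(\xi)\mathcal{F}(f)(\xi)\) for \(f \in \dom(L\partial_x)\), the operator \(L\partial_x\) is unitarily equivalent to multiplication by the purely imaginary (by (A1), \(\xi \mapsto i\xi m(\xi)\) is an odd real function times \(i\), hence imaginary-valued) measurable function \(i\xi m(\xi)\). For a multiplication operator \(M_\psi\) by a measurable function \(\psi\) with \(\re\psi \equiv 0\), the generated group is multiplication by \(e^{t\psi}\); here this gives that \(e^{-tL\partial_x}\) corresponds on the Fourier side to multiplication by \(e^{it\xi m(\xi)}\), which is exactly the claimed formula. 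One small point to verify is that the group so constructed coincides with the Stone group; this follows because both are the unique strongly continuous unitary group whose generator is \(-L\partial_x\), or alternatively one simply checks directly that \(\mathcal{F}^{-1}(e^{it\xi m(\xi)}\mathcal{F}\,\cdot\,)\) is a strongly continuous unitary group with the correct generator (differentiating the symbol at \(t=0\) recovers \(i\xi m(\xi)\) on the natural domain \(\{f : \xi m(\xi)\hat f \in L_2\} = \dom(L\partial_x)\)), using dominated convergence and the temperateness bound (A2) to control the difference quotients.

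It remains to promote the statement from \(L_2 = H^0\) to \(H^s\) for every \(s \geq 0\). Here I would use that the Fourier multiplier \(e^{it\xi m(\xi)}\) is unimodular, \(|e^{it\xi m(\xi)}| = 1\), so it commutes with the multiplier \((1+|\xi|^2)^{s/2} = \Lambda^s\) defining the \(H^s\)-norm and preserves it:
\begin{equation*}
\|e^{-tL\partial_x}f\|_{H^s}^2 = \int_\R (1+|\xi|^2)^s \bigl|e^{it\xi m(\xi)}\hat f(\xi)\bigr|^2\,\diff\xi = \int_\R (1+|\xi|^2)^s |\hat f(\xi)|^2\,\diff\xi = \|f\|_{H^s}^2.
\end{equation*}
Thus \(\{e^{-tL\partial_x}\}\) restricts to a group of isometries of \(H^s\); strong continuity on \(H^s\) again follows from dominated convergence applied to \(\int_\R (1+|\xi|^2)^s |e^{it\xi m(\xi)} - 1|^2 |\hat f(\xi)|^2 \,\diff\xi \to 0\) as \(t \to 0\), the integrand being dominated by the integrable function \(4(1+|\xi|^2)^s|\hat f(\xi)|^2\). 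I do not expect a genuine obstacle here: the only mildly delicate point is making sure the abstractly-defined Stone group agrees with the concretely-defined Fourier multiplier group, and that the weak regularity hypothesis (A2) on \(m\) — which is all we have, since \(m\) need not be continuous — is still enough; it is, because everything is phrased in terms of the measurable function \(i\xi m(\xi)\) and \(L_2\)-convergence, for which measurability plus the temperate bound suffices to run the dominated-convergence arguments.
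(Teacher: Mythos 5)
Your proof is correct and follows essentially the same route the paper indicates: the paper simply cites Lemma~\ref{prop LD} and Stone's theorem without further detail, and your write-up supplies exactly the standard expansion of that citation (Stone's theorem for the group on \(L_2\), identification of the generator via the Fourier multiplier \(i\xi m(\xi)\), and the promotion to \(H^s\) using unimodularity of the symbol together with dominated convergence for strong continuity).
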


\subsection{Properties of the operators \(A\) and \(B\)}
We now study the operator $A(t,y)$ for a fixed $y\in B_R \subset H^{s}$. All estimates to come are uniform with respect to such \(y\). To prove that the operator \(A(t,y)\) is quasi-m-accretive, we establish that both \(A(t,y)\) and its adjoint are quasi-accretive. It then follows by \cite[Corollary 4.4]{Pazy83} that \(A(t,y)\) is quasi-m-accretive, as proved in the following lemma.

\begin{lemma}\label{lemma:quasi-m}
For any fixed \(y \in B_R\), the operator $-A(t,y)$ is the generator of a $C_{0}$-semigroup on $X$, and \(A(t,y)\) is uniformly quasi-m-accretive on \(B_R\). In particular, for all \(y \in B_R\) one has the uniform estimate
\begin{equation}\label{quasi-acc-relation}
(A(t,y)w,w)_{X}\gtrsim -\|w\|_{X}^{2},
\end{equation}
for \(w\in C_{0}^{\infty}\).
\end{lemma}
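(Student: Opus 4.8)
The plan is to establish quasi-accretivity of both $A(t,y)$ and its formal adjoint on $C_0^\infty$, and then invoke \cite[Corollary 4.4]{Pazy83} to upgrade this to quasi-m-accretivity and the generation of a $C_0$-semigroup. Throughout, $t$ and $y \in B_R$ are fixed, and I write $u = e^{-tL\partial_x} y \in H^s$, so that $n'(u)$ is a fixed real-valued function; since $s > \tfrac32$ we have $H^s \hookrightarrow C^1_b$, hence $n'(u), \, \partial_x(n'(u)) = n''(u)\,u_x \in L_\infty$ (here one uses $n \in H^{s+2}_{\mathrm{loc}}$, so $n', n''$ are at least continuous on the bounded range of $u$). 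Denote $b := n'(u)$ and note $\|b\|_{L_\infty} + \|b_x\|_{L_\infty}$ is bounded uniformly over $y \in B_R$, since $\|u\|_{H^s} \leq R$ and the relevant local Sobolev norms of $n$ on $[-\|u\|_{L_\infty}, \|u\|_{L_\infty}]$ are controlled.

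First I would compute, for $w \in C_0^\infty$,
\begin{equation*}
(A(t,y)w, w)_X = \big( e^{tL\partial_x}[\,b\,\partial_x (e^{-tL\partial_x} w)\,], w\big)_{L_2} = \big( b\,\partial_x(e^{-tL\partial_x} w),\, e^{-tL\partial_x} w\big)_{L_2},
\end{equation*}
using that $e^{tL\partial_x}$ is unitary on $L_2$ (Lemma~\ref{fourier LD}) with adjoint $e^{-tL\partial_x}$. Writing $\tilde w = e^{-tL\partial_x} w$, this is $\int_\R b\, \tilde w_x\, \tilde w \, \diff x = \tfrac12 \int_\R b\, \partial_x(\tilde w^2)\, \diff x = -\tfrac12 \int_\R b_x\, \tilde w^2\, \diff x$, so
\begin{equation*}
(A(t,y)w, w)_X = -\tfrac12 \int_\R b_x\, \tilde w^2 \, \diff x \geq -\tfrac12 \|b_x\|_{L_\infty} \|\tilde w\|_{L_2}^2 = -\tfrac12 \|b_x\|_{L_\infty} \|w\|_{L_2}^2,
\end{equation*}
again by unitarity. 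This is precisely \eqref{quasi-acc-relation} with a constant uniform in $y \in B_R$, and it shows $A(t,y) + \alpha$ is accretive for $\alpha := \tfrac12 \sup_{y \in B_R}\|b_x\|_{L_\infty}$. A short approximation argument extends the identity from $C_0^\infty$ to $\dom(A(t,y)) \supset H^1$: since $e^{\mp tL\partial_x}$ preserves $H^1$ and $b \in W^{1,\infty}$, the bilinear form is continuous in the $H^1$ topology and $C_0^\infty$ is dense in $H^1$.

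Next, for the adjoint: one checks that the formal adjoint of $A(t,y)$ is $A^*(t,y) = e^{tL\partial_x}[\,-\partial_x(b\, \cdot)\,]e^{-tL\partial_x} = -A(t,y) - e^{tL\partial_x}[b_x\, \cdot\,]e^{-tL\partial_x}$, because $\partial_x^* = -\partial_x$ on $L_2$, $b^* = b$ (real multiplier), and $(e^{\pm tL\partial_x})^* = e^{\mp tL\partial_x}$. Repeating the computation above with the roles of the factors swapped, $(A^*(t,y)w, w)_X = -\tfrac12\int_\R b_x\, \tilde w^2\, \diff x \geq -\tfrac12\|b_x\|_{L_\infty}\|w\|_{L_2}^2$ as well, so $A^*(t,y) + \alpha$ is accretive with the same $\alpha$. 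Since $A(t,y)$ is densely defined and closed (it is a bounded perturbation, via the $b_x$ term, of the skew-adjoint operator $e^{tL\partial_x}[\,b\,\partial_x - \tfrac12 b_x]e^{-tL\partial_x}$ whose closedness follows from that of $L\partial_x$ in Lemma~\ref{prop LD}), \cite[Corollary 4.4]{Pazy83} applies to $A(t,y) + \alpha$: an accretive, densely defined, closed operator whose adjoint is also accretive is m-accretive. Hence $A(t,y)$ is quasi-m-accretive, $-A(t,y)$ generates a $C_0$-semigroup of contractions up to the exponential factor $e^{\alpha t}$, and all constants are uniform over $B_R$ as required.

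The main obstacle I anticipate is not the accretivity computation itself — which is essentially an integration by parts — but rather the bookkeeping needed to justify that $A(t,y)$ and $A^*(t,y)$ have the claimed domains and are genuinely closed and densely defined, so that the abstract criterion from \cite{Pazy83} applies. This requires care because $L\partial_x$ is an unbounded operator of possibly high order (growth exponent $l$), so conjugation by $e^{\pm tL\partial_x}$ must be handled at the level of Fourier multipliers; the cleanest route is to observe that conjugation is unitary on $L_2$ and maps $H^1$ to $H^1$ boundedly for each fixed $t$ (which follows from Lemma~\ref{fourier LD} and the fact that $e^{it\xi m(\xi)}$ has modulus one), reducing everything to the elementary mapping and closedness properties of $b\,\partial_x$ on $L_2$ with domain $H^1$. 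A secondary point is verifying the uniformity in $y \in B_R$ of $\|n'(e^{-tL\partial_x}y)_x\|_{L_\infty}$; this is where Assumption~(A3), giving $n \in H^{s+2}_{\mathrm{loc}} \hookrightarrow C^2_{\mathrm{loc}}$ in one dimension for $s > \tfrac32$, together with the uniform bound $\|e^{-tL\partial_x}y\|_{L_\infty} \lesssim \|y\|_{H^s} \leq R$, does the work.
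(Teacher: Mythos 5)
Your proposal follows essentially the same route as the paper: express $(A(t,y)w,w)_X$ via the unitary conjugation, integrate by parts to obtain the accretivity bound in terms of $\|\partial_x(n'(e^{-tL\partial_x}y))\|_{L_\infty}$, verify the same for the adjoint $G_0 = A^{*}$, and invoke \cite[Corollary 4.4]{Pazy83} to upgrade quasi-accretivity of both to quasi-m-accretivity. Two small remarks: the skew-adjoint part of $b\partial_x$ is $b\partial_x + \tfrac12 b_x$ rather than $b\partial_x - \tfrac12 b_x$ (harmless, since you only use the decomposition to transfer closedness across a bounded perturbation); and you obtain the uniform $L_\infty$ bound on $b_x = n''(e^{-tL\partial_x}y)\,\partial_x(e^{-tL\partial_x}y)$ more directly via the embedding $H^{s+2}_{\mathrm{loc}}\hookrightarrow C^2_{\mathrm{loc}}$ together with $\|e^{-tL\partial_x}y\|_{H^s}\le R$, whereas the paper instead splits off $n''(0)$ and appeals to Theorem~\ref{composition theorem} and the Sobolev embedding $H^{s-1}\hookrightarrow L_\infty$ — your version is lighter for this lemma, though the composition theorem is of course still indispensable for Lemma~\ref{lemma:Lipschitz}.
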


\begin{proof}
With 
\begin{equation}
\mathrm{dom}(A(t,y))=\{u\in L_{2} \colon n'(e^{-tL\partial_x}y)e^{-tL\partial_x} u\in H^{1}\}, 
\end{equation}
$A(t,y)$ is densely defined in \(L_{2}\), and closed: Take $\{u_{n}\}\subset \mathrm{dom}(A(t,y))$ with $u_{n}\to u\in L_{2}$ and $A(t,y)u_{n}\to v\in L_{2}$. Abbreviate \(A = A(t,y)\). Then, for any $\phi\in C_{c}^{\infty}$,
\begin{equation*}
\begin{split}
\langle v, \phi \rangle_{L_{2}} &= \lim_{n \to \infty} \langle A u_n, \phi \rangle_{L_{2}}\\
&= \lim_{n \to \infty} \langle  u_n, A^*\phi \rangle_{L_{2}}\\
&= \langle  u , A^*\phi \rangle_{L_{2}}\\
&= \langle  u , (A_1^* + A_2^* \partial_x )\phi \rangle_{L_{2}}\\
&= \langle  A_1 u ,  \phi \rangle_{L_{2}} + \langle A_2 u,  \phi^\prime \rangle_{L_{2}},
\end{split}
\end{equation*}
with 
\begin{align*}
A_1 &= -e^{tL\partial_x}n''(e^{-tL\partial_x}y) (e^{-tL\partial_x} y_{x}) e^{-tL\partial_x},\\
A_2 &= -e^{tL\partial_x}n'(e^{-tL\partial_x}y)e^{-tL\partial_x},
\end{align*}
both self-adjoint and bounded on \(L_{2}\). Therefore,
\[
(A_2 u)^\prime = v - A_1 u \in L_{2},
\]
\(u \in \dom(A(t,y))\), and \(A(t,y)\) is closed. In order to prove that $A(t,y)$ is quasi-m-accretive, we define 
\begin{equation*}
\begin{split}
Gu & = (e^{tL\partial_x}n'(e^{-tL\partial_x}y)e^{-tL\partial_x}u)_{x}-e^{tL\partial_x}n''(e^{-tL\partial_x}y)e^{-tL\partial_x}y_{x}e^{-tL\partial_x}u,\\
G_{0}u &= -(e^{tL\partial_x}n'(e^{-tL\partial_x}y)e^{-tL\partial_x}u)_{x},
\end{split}
\end{equation*}
with dense domain
\[
\{u\in L_{2} \colon e^{tL\partial_x}n'(e^{-tL\partial_x}y)e^{-tL\partial_x}u\in H^{1}\}. 
\]
The density of $C_{c}^{\infty}$ in $L_{2}$ then implies that $A(t,y)=G$.
Recall that \(e^{\pm t\partial_x L}\) is unitary on \(H^r\), for all \(r \in \R\), and that \(H^{r} \hookrightarrow BC\) for \(r > \frac{1}{2}\). Hence, for any fixed \(y \in B_R\),
\begin{equation}\label{disspative relation}
\begin{aligned}
&\|\partial_x (n'(e^{-tL\partial_x}y))\|_{L_{\infty}}
=\|n''(e^{-tL\partial_x} y) e^{-tL\partial_x} y_{x}\|_{L_{\infty}}\\
&\leq  \|(n''(e^{-tL\partial_x} y) - n''(0)) e^{-tL\partial_x} y_{x}\|_{L_{\infty}} + \|n''(0) e^{-tL\partial_x} y_{x}\|_{L_{\infty}}\\
&\lesssim  \|(n''(e^{-tL\partial_x} y) - n''(0))\|_{H^{s-1}} \|y_{x}\|_{H^{s-1}} + \| y_{x}\|_{H^{s-1}}\\
&\lesssim  R + (1+R)^{s-\frac{3}{2}}R^2,
\end{aligned}
\end{equation}
where we have applied Theorem~\ref{composition theorem} to $n''(\cdot)-n''(0)$. With \eqref{disspative relation} and using the skew-adjointness of \(\partial_x \),  quasi-accretiveness of both $G$ and $G_{0}$ can be proved using integration by parts (this is structurally equivalent to proving quasi-accretiveness of \(u \phi u_x\) for  a well-behaved function \(\phi\)).\\

We now show that \(G\) and \(G_0\) are closed and adjoints of each other.
For closedness, this is analogous to the above proof of that \(A\) is closed.
To see that $G_{0}$ is the adjoint of $G$ in $L_{2}$, consider $v\in \mathrm{dom}(G_{0})$ and $\phi\in C_{c}^{\infty}\subset \mathrm{dom}(G)$. One then has
\begin{equation*}
\begin{split}
\int_{\R}\phi G^{*}v \, \diff x &= \int_{\R}G\phi v\, \diff x,\\
&=  \int_{\R}(e^{tL\partial_x}n'(e^{-tL\partial_x}y)e^{-tL\partial_x}\phi)_{x}v  \, \diff x\\ 
&\quad - e^{tL\partial_x}n''(e^{-tL\partial_x}y)e^{-tL\partial_x}y_{x}e^{-tL\partial_x}\phi v \, \diff x\\
&=\int_{\R}(e^{tL\partial_x}n'(e^{-tL\partial_x}y)e^{-tL\partial_x}\phi_{x})v \, \diff x\\
&=\int_{\R}(e^{tL\partial_x}n'(e^{-tL\partial_x}y)e^{-tL\partial_x}v)\phi_{x} \, \diff x,\\
\end{split}
\end{equation*}
which implies that $v\in \mathrm{dom}(G_{0})$. The density of $C_{c}^{\infty}$ in \(L_{2}\) directly yields that $G^{*}\subset G_{0}$.  To obtain the opposite inclusion, note that just as in the proof of Lemma~\ref{prop LD}, for any $u\in \mathrm{dom}(G_{0})$ there is a sequence of smooth functions $u_{k}$ converging to  $u$ in \(L_{2}\), such that
\begin{equation}\label{appro seq G}
(e^{tL\partial_x}n'(e^{-tL\partial_x}y)e^{-tL\partial_x}u_{k})_{x} \stackrel{L_{2}}{\to} (e^{tL\partial_x}n'(e^{-tL\partial_x}y)e^{-tL\partial_x}u)_{x},
\end{equation}
as \(k \to \infty\). As above, write $f_{k}$ for the convolution \(f*\varrho_{k}\), where \(\varrho_k\) is a standard mollifier. It is then clear (cf. the proof of Lemma~\ref{prop LD}) that 
\[
Q(\partial_{x}) (f \ast \varrho_k) = (Q(\partial_{x}) f) \ast \varrho_k \stackrel{L_{2}}{\to}  Q(\partial_{x}) f,
\] 
for any Fourier multiplier operator \(Q(\partial_{x})\) for which \(Q(\partial_{x}) f \in L_{2}\). If \(n\) is a bounded function, one therefore immediately obtains the required convergence \(n Q(\partial_{x}) (f \ast \varrho_k) \stackrel{L_{2}}{\to}  n Q(\partial_{x}) f\). In the case of an operator \(\partial_{x} [n Q(\partial_{x})]\) as in \eqref{appro seq G}, where \(n\) is a bounded function such that \(\partial_{x}n \in L_{\infty}\), one notes that
\[
\partial_{x} [n Q(\partial_{x})] = (\partial_{x} n) Q(\partial_{x}) + n \partial_{x} Q(\partial_{x}),
\]
and both terms which are of the form \(\tilde n \tilde Q(\partial_{x})\). This argument is valid for any fixed \(y \in B_R \subset H^s\). Thus, for $\color{black} v\in \mathrm{dom}(G_{0})$, there exists a sequence $\{v_{k}\}$ satisfying \eqref{appro seq G} such that
\begin{equation}\label{eq:limit equation}
\begin{aligned}
&\int_{\R}(Gu) v_{k}\\
&=\int_{\R}\big(e^{tL\partial_x}n'(e^{-tL\partial_x}y)e^{-tL\partial_x}u)_{x}-e^{tL\partial_x}n''(e^{-tL\partial_x}y)e^{-tL\partial_x}y_{x}e^{-tL\partial_x}u\big) v_{k}\, \diff x\\
&=-\int_{\R}\big[(e^{tL\partial_x}n'(e^{-tL\partial_x}y)e^{-tL\partial_x}u)(v_{k})_{x}\\
&\quad+(e^{tL\partial_x}n''(e^{-tL\partial_x}y)e^{-tL\partial_x}y_{x}e^{-tL\partial_x}u)v_{k}\big] \, \diff x\\
&=-\int_{\R}u\Big[e^{tL\partial_x}n'(e^{-tL\partial_x}y)e^{-tL\partial_x}(v_{k})_{x}\\
&\quad+(e^{tL\partial_x}n''(e^{-tL\partial_x}y)e^{-tL\partial_x}y_{x}e^{-tL\partial_x}v_{k})\Big] \, \diff x\\
&=-\int_{\R}u\big(e^{tL\partial_x}n'(e^{-tL\partial_x}y)e^{-tL\partial_x}v_{k}\big)_{x}\, \diff x,
\end{aligned}
\end{equation}
Taking the limit with respect to $k$  in \eqref{eq:limit equation}, we deduce for all $u\in \mathrm{dom}(G)$ that
\begin{equation*}
\int_{\R}(Gu) v=\int_{\R}u(G_{0}v),
\end{equation*}
which means that $G_{0}\subset G^{*}$. Therefore, $G_{0}=G^{*}$. By \cite[Corollary 4.4]{Pazy83}, $G$ and hence $A(t,y)$ are quasi-m-accretive.\\

\end{proof}

Denote by 
\[
[T_{1},T_{2}]=T_{1}T_{2}-T_{2}T_{1}
\] 
the commutator of two general operators $T_{1}$ and $T_{2}$. Since $\partial_x$ and $\mathit{L}$ are both multiplier operators, clearly $[\partial_x,\mathit{L}]=0$ in a Sobolev setting. Let
\begin{equation}\label{eq:B}
B(t,y) = \Lambda^s (A(t,y))\Lambda^{-s}-A(t,y) = [\Lambda^{s}, A(t,y)]\Lambda^{-s},
\end{equation}
with \(\Lambda^s = (1 - \partial_x^2)^{\frac{s}{2}}\), and \(A(t,y)\) defined as in~\eqref{eq:A}. We prove the Lipschitz continuity of the operators $A(t,\cdot)$ and $B(t,\cdot)$.

\begin{lemma}\label{lemma:Lipschitz}
The operator \(B(t,y)\) from~\eqref{eq:B} is bounded in \(\mathcal{B}(X)\), with
\begin{equation}\label{eq:B Lipschitz}
\|(B(t,y)-B(t,z))w\|_{X} \lesssim \|y-z\|_{Y}\|w\|_{X},
\end{equation}
uniformly for all \(y,z\in B_R \subset Y\) and all \(w\in X\).
The estimate \eqref{eq:B Lipschitz} holds if we replace \(A(t,\cdot)\) by \(B(t,\cdot)\) and interchange the norms in $X$ and $Y$ spaces on the right hand side.
\end{lemma}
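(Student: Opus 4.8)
The plan is to reduce everything to the two self-adjoint bounded multiplication-type operators that already appeared in the proof of Lemma~\ref{lemma:quasi-m}, namely the ``zeroth-order part'' $A_1(t,y) = -e^{tL\partial_x} n''(e^{-tL\partial_x}y)(e^{-tL\partial_x}y_x) e^{-tL\partial_x}$ and the ``first-order coefficient'' $A_2(t,y) = -e^{tL\partial_x} n'(e^{-tL\partial_x}y) e^{-tL\partial_x}$, so that $A(t,y) = A_1(t,y) + A_2(t,y)\partial_x$ on smooth functions. Since $e^{\pm tL\partial_x}$ is a unitary multiplier commuting with $\Lambda^s$ and with $\partial_x$, conjugation by $\Lambda^s$ only sees the pointwise multiplication by $n'(e^{-tL\partial_x}y)$ and by $n''(e^{-tL\partial_x}y) e^{-tL\partial_x}y_x$. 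Writing $a = e^{-tL\partial_x}y$, the decisive quantity is the commutator $[\Lambda^s, M_b]\Lambda^{-s}$ for a multiplication operator $M_b$, where $b$ is either $n'(a)$ or $\partial_x(n'(a)) = n''(a)a_x$. The standard Kato--Ponce / commutator estimate gives
\begin{equation}\label{eq:KP}
\|[\Lambda^s, M_b] w\|_{L_2} \lesssim \|\partial_x b\|_{L_\infty}\|\Lambda^{s-1} w\|_{L_2} + \|\Lambda^s b\|_{L_2}\|w\|_{L_\infty},
\end{equation}
and the $\Lambda^{s-1}$ is exactly what is absorbed by the $\partial_x$ carried inside $A$; for the genuinely zeroth-order piece coming from $A_1$ one simply estimates $\|[\Lambda^s, M_b]\Lambda^{-s}\|_{\mathcal B(L_2)} \lesssim \|b\|_{B^s_{22}} = \|b\|_{H^s}$ directly (with $s > 3/2 > 1 + 1/2$, so $H^s$ is an algebra-type setting). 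Boundedness of $B(t,y)$ in $\mathcal B(X)$ then follows once we know $n'(a) \in H^s$ and $n''(a)a_x \in H^{s-1}$ uniformly for $y \in B_R$, which is precisely the content of Theorem~\ref{composition theorem} applied to $n' - n'(0)$ and $n'' - n''(0)$ (the constants subtracted so that the composition theorem's hypothesis $f(0)=0$ holds; constants contribute nothing to the homogeneous commutator and are controlled by $\|a\|_{H^{s-1}}$ in the $L_\infty$ terms), using that $e^{-tL\partial_x}$ is unitary on every $H^r$ so $\|a\|_{H^r} = \|y\|_{H^r}$.

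For the Lipschitz estimate \eqref{eq:B Lipschitz}, the scheme is: write the difference $B(t,y) - B(t,z)$ as commutators of $\Lambda^s$ with $M_{b(y)-b(z)}$ (again $b$ standing for $n'(a)$ or $n''(a)a_x$), and apply \eqref{eq:KP} to the difference. This leaves us needing bounds of the form $\|n'(a_y) - n'(a_z)\|_{H^s} \lesssim \|y-z\|_{H^s}$ and $\|n''(a_y)(a_y)_x - n''(a_z)(a_z)_x\|_{H^{s-1}} \lesssim \|y-z\|_{H^s}$, uniformly on $B_R$. The first follows by writing $n'(a_y) - n'(a_z) = \big(\int_0^1 n''(a_z + \tau(a_y-a_z))\,d\tau\big)(a_y - a_z)$, estimating the integral term in $H^s$ via the composition/product structure (here one uses $H^s$ being closed under products of an $H^s$ function with an $H^s$ function when $s>1/2$, together with Theorem~\ref{composition theorem} applied to $n''-n''(0)$, the $\tau$-dependence handled by Minkowski's integral inequality), and then $\|a_y - a_z\|_{H^s} = \|y-z\|_{H^s}$; this is where the extra regularity $n \in H^{s+2}_{\mathrm{loc}}$ (so $n'' \in H^s_{\mathrm{loc}}$, one derivative more than strictly needed for mere boundedness) is used. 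The second, with the extra $x$-derivative on $a$, is handled the same way after expanding $n''(a_y)(a_y)_x - n''(a_z)(a_z)_x = (n''(a_y)-n''(a_z))(a_y)_x + n''(a_z)((a_y)_x-(a_z)_x)$ and using that $H^{s-1}$ with $s-1 > 1/2$ is still a product-stable space. The analogous estimate for $A(t,\cdot)$ with the $X$ and $Y$ norms interchanged on the right-hand side of \eqref{eq:B Lipschitz} — i.e.\ $\|(A(t,y)-A(t,z))w\|_{L_2} \lesssim \|y-z\|_{L_2}\|w\|_{H^s}$ — is in fact easier: $A(t,y)-A(t,z) = M_{b_1(y)-b_1(z)} + M_{b_2(y)-b_2(z)}\partial_x$ (conjugated by unitaries), and one only needs the $L_\infty$ differences $\|n'(a_y)-n'(a_z)\|_{L_\infty}$, $\|n''(a_y)(a_y)_x - n''(a_z)(a_z)_x\|_{L_\infty}$ controlled by $\|y-z\|_{L_2}$; but $L_2$-smallness does not directly give $L_\infty$-smallness, so here one interpolates — $\|y-z\|_{L_\infty} \lesssim \|y-z\|_{L_2}^{\theta}\|y-z\|_{H^s}^{1-\theta}$ — and absorbs the $H^s$-factors of $y,z$ into the uniform constant over $B_R$; acting with $\partial_x$ on $w \in H^s$ lands in $H^{s-1} \hookrightarrow L_\infty$, which closes the estimate.

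The main obstacle I expect is not any single estimate but the bookkeeping needed to route the derivative correctly: the operator $A(t,y)$ is first order, so a naive bound $\|[\Lambda^s, A(t,y)]\Lambda^{-s}w\|_{L_2} \lesssim \|w\|_{H^{s}}$ would only give $B(t,y) \in \mathcal B(Y,X)$, not $\mathcal B(X)$, and one must be careful that the loss of one derivative in the Kato--Ponce commutator \eqref{eq:KP} exactly cancels the $\partial_x$ inside $A$, so that the zeroth-order remainder is genuinely bounded on $L_2$ alone; this forces one to commute $\Lambda^s$ past $M_{n'(a)}$ \emph{and} the trailing $\partial_x$ together rather than separately, and to track that the ``leftover'' piece $(\partial_x M_{n'(a)})$ is exactly the $A_1$-type term, already controlled. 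A secondary technical point is justifying the formal manipulations ($A = A_1 + A_2\partial_x$, integration by parts, passing commutators through the unitary group) on the operator domains rather than just on $C_c^\infty$; this is handled exactly as in Lemma~\ref{prop LD} and Lemma~\ref{lemma:quasi-m} by the mollifier density argument, using that $n'$ and $n''$ are bounded with bounded derivative on the (compact) range of $a = e^{-tL\partial_x}y$ for $y \in B_R$.
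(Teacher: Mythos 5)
Your argument for the boundedness of $B(t,y)$ and its Lipschitz dependence on $y$ tracks the paper closely: conjugate out the unitaries, reduce to the commutator $[\Lambda^s, M_{n'(a)}]$, invoke the Kato--Ponce bound, and feed the composition theorem to control $\|\partial_x(n'(a))\|_{H^{s-1}}$ and the integrated difference $n'(a_y)-n'(a_z)$. Your ``main obstacle'' paragraph about routing the derivative is, however, a non-issue: $\Lambda^s$ and $\partial_x$ are both Fourier multipliers and so commute, hence
\[
[\Lambda^s, M_{n'(a)}\partial_x]\Lambda^{-s} = [\Lambda^s, M_{n'(a)}]\,\partial_x\Lambda^{-s},
\]
with no residual term; one then bounds $[\Lambda^s, M_{n'(a)}]\Lambda^{1-s}\in\mathcal B(L_2)$ and the harmless multiplier $\Lambda^{s-1}\partial_x\Lambda^{-s}\in\mathcal B(L_2)$ separately, which is exactly how the paper writes it. Splitting $A$ into $A_1+A_2\partial_x$ is unnecessary here.

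The genuine gap is in your treatment of the final claim, the estimate
$\|(A(t,y)-A(t,z))w\|_{X} \lesssim \|y-z\|_{X}\|w\|_{Y}$.
Your interpolation $\|y-z\|_{L_\infty}\lesssim \|y-z\|_{L_2}^{\theta}\|y-z\|_{H^s}^{1-\theta}$ only yields a factor $\|y-z\|_{L_2}^{\theta}$ with $\theta<1$, i.e.\ a H\"older modulus, not the Lipschitz modulus demanded by condition (i) of Theorem~\ref{Kato's theorem}. Moreover, even granting that, you propose to pair an $L_\infty$ coefficient bound with $\partial_x w\in H^{s-1}\hookrightarrow L_\infty$; two $L_\infty$ factors do not produce an $L_2$ bound for a product over $\R$. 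The fix is simpler than what you attempted and needs no interpolation: after conjugating out the unitary, estimate
\[
\|(n'(a_y)-n'(a_z))\,\partial_x e^{-tL\partial_x}w\|_{L_2}
\le \|n'(a_y)-n'(a_z)\|_{L_2}\,\|\partial_x e^{-tL\partial_x}w\|_{L_\infty},
\]
with the second factor controlled by $\|w\|_{H^s}$ via $H^{s-1}\hookrightarrow L_\infty$. For the first factor, write
\[
n'(a_y)-n'(a_z)=\Big(\int_0^1 n''(\tau a_y+(1-\tau)a_z)\,d\tau\Big)(a_y-a_z),
\]
note that $\|a_y\|_{L_\infty},\|a_z\|_{L_\infty}\lesssim R$ by Sobolev embedding and unitarity, so $n''$ is bounded on the relevant range (using $n''\in H^{s}_{\mathrm{loc}}\hookrightarrow C_{\mathrm{loc}}$), and conclude
\[
\|n'(a_y)-n'(a_z)\|_{L_2}\le C(n,R)\,\|a_y-a_z\|_{L_2}=C(n,R)\,\|y-z\|_{L_2}.
\]
This yields the Lipschitz estimate in the $X$-norm directly, and is presumably what the paper means by the $A$-estimate being ``structurally similar, but easier.''
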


\begin{proof}
Because Fourier multipliers commute, one has
\begin{equation*}
[\Lambda^{s},A(y)]=e^{tL\partial_x}[\Lambda^{s},n'(e^{-tL\partial_x}y)]e^{-tL\partial_x}\partial_x,
\end{equation*}
{\color{black} and with classical commutator estimates  (cf. \cite{MR2333212,MR0407477})} that
\[
\|[\Lambda^{s},n'(e^{-tL\partial_x}y)]\Lambda^{1-s}\| \lesssim \|\partial_x(n'(e^{-tL\partial_x}y))\|_{H^{s-1}},
\]
for all \(y \in B_R\). With Theorem~\ref{composition theorem}, one {\color{black} further} has that
\begin{equation}\label{eq:split n}
\begin{aligned}
&\|\partial_x(n'(e^{-tL\partial_x}y))\|_{H^{s-1}}\\ 
&\lesssim \|(n''(e^{-tL\partial_x}y)-n''(0)) e^{-tL\partial_x}y_{x}\|_{H^{s-1}} + \|n''(0) e^{-tL\partial_x}y_{x}\|_{H^{s-1}}\\
&\lesssim \|n''(e^{-tL\partial_x}y)-n''(0) \|_{H^{s-1}}\|e^{-tL\partial_x}y_{x}\|_{H^{s-1}} + |n''(0) |\|y\|_{H^{s}}\\
&\lesssim (1+R)^{s-\frac{3}{2}}R^{2}+R,
\end{aligned}
\end{equation}
where all estimates depend upon the radius of the ball \(B_R\) in which \(y\) lies, and the final estimate also on the nonlinearity \(n\). Thus, for any $z\in L_{2}$, we have 
\begin{align*}
\|B(t,y)z\|_{L_{2}} &= \|[\Lambda^{s},n'(e^{-tL\partial_x}y)]\Lambda^{1-s}\Lambda^{s-1}\partial_x\Lambda^{-s}z\|_{L_{2}}\\
&\leq  \|[\Lambda^{s},n'(e^{-tL\partial_x}y)]\Lambda^{1-s}\|\|\Lambda^{s-1}\partial_x\Lambda^{-s}z\|_{L_{2}}\\
&\lesssim \left((1+R)^{s-\frac{3}{2}}R^{2}+R\right)\|z\|_{L_{2}},
\end{align*}
whence $B(t,y)$ is bounded on $L_{2}$, uniformly for \(y \in B_R\).

To prove the Lipschitz continuity in \(y\), notice that for any $y,z\in B_R$ and $w \in X$, one has the uniform estimate
\begin{align*}
&\|B(y)w-B(z) w\|_{L_{2}}\\ 
&\lesssim \|[\Lambda^{s},n'(e^{-tL\partial_x}y)-n'(e^{-tL\partial_x}z)]\Lambda^{1-s}\| \|\Lambda^{s-1}\partial_x\Lambda^{-s} w\|_{L_{2}}\\
&\lesssim \|\partial_x(n'(e^{-tL\partial_x}y)-n'(e^{-tL\partial_x}z))\|_{H^{s-1}}\|w\|_{L_{2}}\\
&\lesssim \|n'(e^{-tL\partial_x}y)-n'(e^{-tL\partial_x}z)\|_{H^{s}}\|w\|_{L_{2}}.
\end{align*}
Appealing to Theorem~\ref{composition theorem}, one furthermore estimates
\begin{align*}
&\|n'(e^{-tL\partial_x}y)-n'(e^{-tL\partial_x}z)\|_{H^{s}}\\
&= \|\int^{1}_{0}n''(e^{-tL\partial_x}(z+t(y-z)))e^{-tL\partial_x}(y-z)\,\, \diff t\|_{H^{s}}\\
&\lesssim \left(R(1+R)^{s-\frac{3}{2}}+1\right)\|y-z\|_{H^{s}},
\end{align*}
where we have used the same splitting of \(n^{\prime\prime}\) as in \eqref{eq:split n}. 
Thus, $B(y)$ satisfies condition (ii) in Theorem~\ref{Kato's theorem} for $y\in B_R \subset H^{s}$,  $s>\frac{3}{2}$. The proof of~\eqref{eq:B Lipschitz} with $A(t,y)$ substituted for \(B(t,y)\) is structurally similar, but easier, than the above proof and we omit the details.
\end{proof}

Since the operator $A(t,y)$ relies on $t$, besides the assumptions in Theorem \ref{Kato's theorem} one also needs to verify the continuity of the map $t\mapsto A(t,y)\in \mathcal{B}(Y,X)$ for each $y\in B_R \subset Y$.  As remarked in \cite{MR0407477}, it however suffices to prove that $t\mapsto A(t,y)$ is strongly continuous.

\begin{lemma}\label{lemma:strongly continuous}
The map $t\mapsto A(t,y)\in \mathcal{B}(Y,X)$ is strongly continuous. 
\end{lemma}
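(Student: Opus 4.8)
The plan is to reduce strong continuity of $t \mapsto A(t,y)$ to the continuity of the maps $t \mapsto e^{\pm t L\partial_x}$ (as strongly continuous groups on every $H^r$, by Lemma~\ref{fourier LD}) together with the continuity of pointwise multiplication by the coefficient $n'(e^{-tL\partial_x}y)$. Recall from~\eqref{eq:A} that for $w \in Y = H^s$,
\[
A(t,y)w = e^{tL\partial_x}\big[n'(e^{-tL\partial_x}y)\,\partial_x\big(e^{-tL\partial_x}w\big)\big],
\]
so it suffices to control, as $t \to t_0$, the three factors: the inner group $e^{-tL\partial_x}$ applied to $w$ and to $y$, the multiplier $n'(e^{-tL\partial_x}y)$, and the outer group $e^{tL\partial_x}$. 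Since $e^{\pm tL\partial_x}$ is unitary on each $H^r$ and strongly continuous there, for fixed $w \in H^s$ we have $e^{-tL\partial_x}w \to e^{-t_0L\partial_x}w$ in $H^s$, hence $\partial_x e^{-tL\partial_x}w \to \partial_x e^{-t_0L\partial_x}w$ in $H^{s-1}$; likewise $e^{-tL\partial_x}y \to e^{-t_0L\partial_x}y$ in $H^s \hookrightarrow BC$.

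First I would handle the coefficient. Write $g_t := e^{-tL\partial_x}y \in H^s$ and estimate $\|n'(g_t) - n'(g_{t_0})\|_{H^s}$; using $n'(g_t) - n'(g_{t_0}) = \int_0^1 n''(g_{t_0} + \sigma(g_t - g_{t_0}))\,\diff\sigma \cdot (g_t - g_{t_0})$ together with the splitting of $n''$ into $n''(0)$ and $n''(\cdot) - n''(0)$ and Theorem~\ref{composition theorem} applied to $n''(\cdot) - n''(0)$ (exactly as in~\eqref{eq:split n} and the Lipschitz estimate in the proof of Lemma~\ref{lemma:Lipschitz}), one gets
\[
\|n'(g_t) - n'(g_{t_0})\|_{H^s} \lesssim_R \|g_t - g_{t_0}\|_{H^s} \longrightarrow 0 \quad \text{as } t \to t_0,
\]
the bound being uniform for $y \in B_R$ since $\|g_t\|_{H^s} = \|y\|_{H^s} < R$ for all $t$. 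In particular $n'(g_t) \to n'(g_{t_0})$ in $H^s$, hence in $L_\infty$, and the multiplication operators $\varphi \mapsto n'(g_t)\varphi$ converge strongly on $L_2$ (even in operator norm $\mathcal{B}(L_2)$, with uniform bound $\sup_t \|n'(g_t)\|_{L_\infty} < \infty$).

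Then I would assemble the pieces by a standard triangle-inequality argument. For fixed $w \in H^s$,
\[
A(t,y)w - A(t_0,y)w = \big(e^{tL\partial_x} - e^{t_0L\partial_x}\big)h_{t} + e^{t_0L\partial_x}\big(h_t - h_{t_0}\big),
\quad h_t := n'(g_t)\,\partial_x e^{-tL\partial_x}w,
\]
where $\|h_t\|_{L_2} \leq \|n'(g_t)\|_{L_\infty}\|\partial_x e^{-tL\partial_x}w\|_{L_2} \lesssim_R \|w\|_{H^s}$ is bounded uniformly in $t$ near $t_0$; the first term tends to $0$ in $L_2$ by strong continuity of $e^{tL\partial_x}$ (applied to the bounded family $h_t$, after first replacing $h_t$ by the fixed $h_{t_0}$ up to a vanishing error), and the second by unitarity together with $h_t \to h_{t_0}$ in $L_2$ — the latter following since $n'(g_t) \to n'(g_{t_0})$ in $L_\infty$ and $\partial_x e^{-tL\partial_x}w \to \partial_x e^{-t_0L\partial_x}w$ in $L_2$. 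Hence $\|A(t,y)w - A(t_0,y)w\|_X \to 0$, which is exactly strong continuity of $t \mapsto A(t,y) \in \mathcal{B}(Y,X)$.

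The only mildly delicate point is that strong continuity of the group $e^{tL\partial_x}$ gives convergence on a \emph{fixed} vector, whereas $h_t$ varies with $t$; this is dispatched by the uniform bound $\sup_{|t-t_0|\le 1}\|h_t\|_{L_2} < \infty$ together with $h_t \to h_{t_0}$ in $L_2$, writing $(e^{tL\partial_x}-e^{t_0L\partial_x})h_t = (e^{tL\partial_x}-e^{t_0L\partial_x})h_{t_0} + e^{tL\partial_x}(h_t - h_{t_0}) - e^{t_0 L\partial_x}(h_t-h_{t_0})$ and using that each group is an isometry. No new estimates beyond those already established for Lemma~\ref{lemma:Lipschitz} are needed; this is the easiest of the hypotheses of Theorem~\ref{Kato's theorem} to verify, and (as noted after the statement) strong continuity suffices in place of norm continuity.
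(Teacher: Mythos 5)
Your proposal is correct and takes essentially the same route as the paper: the paper's one-sentence argument reduces to strong continuity of the multiplication operator $n'(e^{-tL\partial_x}y) - n'(0)$ and then invokes Theorem~\ref{composition theorem} together with the continuity of $t \mapsto e^{-tL\partial_x}y$ in $H^s$ to upgrade this to norm continuity, exactly as you do. You supply the details the paper leaves implicit (the integral-mean-value representation of $n'(g_t)-n'(g_{t_0})$, the splitting of $n''$, and the three-term triangle-inequality bookkeeping for the varying vector $h_t$), but the underlying idea — unitary strongly continuous group plus norm continuity of the multiplication coefficient via the composition theorem — is identical.
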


\begin{proof}
Since $\partial_x$ is bounded from $Y$ to $X$ and $e^{-tL\partial_x}$ is a strongly continuous unitary group on both $X$ and $Y$, it {\color{black} is enough} to prove that the multiplication operator $n'(e^{-tL\partial_x}y)-n'(0)\in \mathcal{B}(Y,X)$ is strongly continuous in $t$. In view of Theorem~\ref{composition theorem}, that map is continuous even in norm, since \(t \mapsto e^{-tL\partial_x}y\) is continuous \(\R \to H^{s}\) for \(s>\frac{3}{2}\).
\end{proof}

We are now ready to prove the main theorem for initial data \(u_0 \in H^s(\mathbb R)\). 
\begin{proof}[Proof of Theorem~\ref{main theorem}]
Based on Lemmata~\ref{lemma:quasi-m}, \ref{lemma:Lipschitz} and~\ref{lemma:strongly continuous}, we may apply Theorem~\ref{Kato's theorem} to find a solution \(v\) to equation \eqref{transed-eq} in the solution class \(C([0,T); H^{s}(\mathbb{R}))\cap C^{1}([0,T); L_{2}(\mathbb{R}))\). Because \(H^{s-1}(\mathbb R)\), $s>\frac{3}{2}$, is an algebra, and 
\[
v\mapsto e^{tL\partial_x}n'(e^{-tL\partial_x}v)e^{-tL\partial_x}\partial_x v
\] 
maps \(H^s(\mathbb R)\) continuously into \(H^{s-1}(\mathbb R)\) one, however, sees that 
\[
v_{t} = -A(t,v)v=-e^{tL\partial_x}n'(e^{-tL\partial_x}v)e^{-tL\partial_x}\partial_x v \in H^{s-1}(\mathbb R).
\]
Recall that $\{e^{-tL\partial_x}\}$ forms a unitary group on $H^{s}$, for any $s\geq 0$. Hence, \(v \in C^{1}([0,T); H^{s-1}(\mathbb{R}))\).  Also, since \([v_0 \mapsto v]\) is continuous \(\color{black} H^s(\mathbb{R}) \to C([0,T),H^s(\mathbb{R}))\), and \(\partial_x\) maps \(H^s(\mathbb R)\) continuously into \(H^{s-1}(\mathbb R)\), the same argument can be used to conclude that 
\[
[v_0 \mapsto v] \in C(H^s(\mathbb{R}),C^1([0,T),H^{s-1}(\mathbb{R}))).
\]
The transformation \eqref{pre-transf} gives a solution $u$ of~\eqref{eq:main} in the class of $C([0,T);H^{s})$ and \([u_0 \mapsto u] \in C(H^s(\mathbb{R}),C([0,T),H^s(\mathbb{R}))\). Now,
\begin{equation}\label{time derivative for u}
u_{t}(t,x)=-e^{-tL\partial_x}L\partial_x v(t,x)+e^{-tL\partial_x}v_{t}(t,x),
\end{equation}
so \(u_t\) in general inherits its smoothness from the minimal regularity of \(v_t\) and  \(L\partial_x v\). By assumption \ref{main assum}, \(L\) is a \(l\)th-order operator, whence we deduce that 
\begin{equation*}
u_{t}(t,x)\in C([0,T);H^{s-\max\{1, l+1\}}).
\end{equation*}
A technique similar to the one above gives the continuous dependence upon initial data $u_{0}\in H^{s}$. 
\end{proof}


\section{Well-posedness for periodic initial data}\label{the case of periodic initial data}
This section focuses on  the  Cauchy problem \eqref{eq:main} in the periodic setting \(\color{black} H^s(\T)\). We first introduce some {\color{black} notation}. For a function $f$  defined on $\mathbb{R}^{n}$ or the $n\textrm{-}$dimensional torus $\T^{n}$,  we define the {\color{black} \(m\)th-order} difference about $f$ by
\begin{equation*}
\begin{split}
\Delta_{h}^{1}f(x)&=f(x+h)-f(x),\quad h,x\in \mathbb{F}^{n},\\
\Delta_{h}^{m}f(x)&=\Delta_{h}^{1}(\Delta_{h}^{m-1}f)(x),\quad m=2,3,...,\quad h,x\in \mathbb{F}^{n},\\
\end{split}
\end{equation*}
where $\mathbb{F}\in \{\R,\T\}$. 
It is easy to verify that
\begin{equation}\label{2nd difference}
2(\Delta_{h}f)(x)=(\Delta_{2h}f)(x)-(\Delta_{h}^{2}f)(x).
\end{equation}
Denote by $\N$ and $\N_{0}$ the positive and nonnegative integers, respectively. For a multi-index $\alpha =(\alpha_{1},\cdot\cdot\cdot,\alpha_{n}),\, \alpha_{j}\in \N_{0}$, we use $\partial^{\alpha}=\frac{\partial^{|\alpha|}}{\partial^{\alpha_{1}}_{x_{1}}\cdot\cdot\cdot\partial^{\alpha_{n}}_{x_{n}}}$
to denote multi-index derivatives with the index $\alpha$, where $|\alpha| =\alpha_{1}+\cdot\cdot\cdot+\alpha_{n}$.
For $s>0$, we use the decomposition $s=[s]^{-}+\{s\}^{+}$ where $[s]^{-}$ is  an integer and $\{s\}^{+}\in (0,1]$. 

\medskip

There is a general characterization for Besov spaces based on spectral decomposition and distribution theory (cf. \cite{MR730762}) but the derivative--difference characterization is enough here since only the classical, normed, Besov spaces  $B^{s}_{pq}(\mathbb{T}^{n})$, $s>0,\, p\in (1,\infty),\,q\in [1,\infty]$, {\color{black} appear in our work}.\footnote{Note that the Besov spaces $B^{s}_{pq}(\mathbb{R}^{n})$  or $B^{s}_{pq}(\mathbb{T}^{n})$ are only quasi-normed spaces in general; however, they are normed spaces for the indices $s,p,q$ in the current setting. {\color{black} The} case $p=1$ is excluded since in this case the composition theorem is not expected to hold for spaces over a torus, as indicated in Theorem \ref{composition theorem} for spaces over the whole space.} 
With the derivative--difference characterization, the Besov spaces $B^{s}_{pq}(\mathbb{T}^{n}),\, q\in [1,\infty)$,   have {\color{black} norms} (cf. \cite{SmTr87})
\begin{equation}\label{Besov norms on torus}
\|f\|_{B^{s}_{pq}(\mathbb{T}^{n})} =\|f\|_{W^{[s]^{-}}_{p}(\mathbb{T}^{n})}+\sum_{|\alpha|=[s]^{-}}\left(\int_{\mathbb{T}^{n}}|h|^{-\{s\}^{+}q}\|\Delta_{h}^{2}\partial^{\alpha}{\color{black} f}\|^{q}_{L_{p}(\mathbb{T}^{n})}\frac{\mathrm{d}h}{|h|^{n}}\right)^{\frac{1}{q}}.
\end{equation}
{\color{black} where $W^{m}_{p}(\mathbb{T}^{n})$, $m\in \mathbb{N}_{0}$, are the standard  Sobolev spaces with norms}
\begin{equation*}
\|f\|_{W^{m}_{p}(\mathbb{T}^{n})}=\sum_{|\alpha|\leq m}\|\partial^{\alpha}f\|_{L_{p}(\mathbb{T}^{n})},
\end{equation*}
{\color{black}  and \(L_{p}(\mathbb{T}^{n})\) is defined in \eqref{periodic torus norm identify}. In the case when $q=\infty$, the above norm must be modified to}
\begin{equation}\label{Besov norms on torus q infty}
\|f\|_{B^{s}_{p\infty}(\mathbb{T}^{n})} =\|f\|_{W^{[s]^{-}}_{p}(\mathbb{T}^{n})}+\sum_{|\alpha|=[s]^{-}}\sup_{0\neq h\in \T^{n}}|h|^{-\{s\}^{+}}\|\Delta_{h}^{2}\partial^{\alpha} {\color{black} f}\|_{L_{p}(\mathbb{T}^{n})}.
\end{equation}
{\color{black} A function $f$ defined on $\T^{n}$ is  naturally identified with a $2\pi$-periodic function on $\R^{n}$, and we denote this other function still by $f$. With that} identification, one can define the Lebesgue measure on $\T^{n}$ and set (cf. \cite{katznelson2004introduction})
\begin{equation}\label{periodic torus norm identify}
\|f\|_{L_{p}(\mathbb{T}^{n})}=\left(\int_{-\pi}^{\pi}\dots\int_{-\pi}^{\pi}|f|^{q}\, \diff x\right)^{\frac{1}{q}}.
\end{equation}
Our composition theorem for periodic Besov spaces relies on Theorem \ref{composition theorem} for spaces over $\R^{n}$. {\color{black} The Besov spaces $B^{s}_{p,q}(\R^{n})$, $s>0,\,p\in (1,\infty),\, q \in [1,\infty]$, can be defined just as in  \eqref{Besov norms on torus} and \eqref{Besov norms on torus q infty} by replacing \(\T^n\) with \(\R^n\) \cite{MR730762}. The following remark follows from a direct calculation.} 
\begin{remark}\label{index s and h}
{\color{black} Let $0 < \delta \ll 1$. Replacing the integral domain \(\T^n\) or $\R^{n}$ by the domain $|h|<\delta$ in the norms \eqref{Besov norms on torus} and \eqref{Besov norms on torus q infty} of the Besov spaces $B^{s}_{pq}$ over \(\T^n\) and \(\R^n\), respectively,  yields an equivalent norm on the same Besov spaces.}
\end{remark}
We proceed by {\color{black} establishing} a relation between periodic and non-periodic Besov spaces. This relation and the techniques used in its proof are key ingradients for the proof of our composition theorem for Besov spaces on a torus.
\begin{lemma}[{\color{black} A localizing property}]\label{per nonper control}
Let $s>0$, $p\in (1,\infty)$ and $q\in [1,\infty]$. For any $\varrho\in C_{c}^{\infty}(\R^{n})$, {\color{black} one has
\begin{equation*}
\|\varrho f\|_{B^{s}_{pq}(\R^{n})}\lesssim \|f\|_{B^{s}_{pq}(\T^{n})},
\end{equation*}
uniformly for all \(f\in B^{s}_{pq}(\T^{n}).\)}
\end{lemma}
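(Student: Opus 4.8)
The plan is to estimate both sides directly through the difference--derivative norms \eqref{Besov norms on torus} and \eqref{Besov norms on torus q infty}, using Remark~\ref{index s and h} to restrict every difference integral to $|h|<\delta$ for a fixed small $\delta>0$. First I would fix a compact set $K\subset\R^{n}$ containing $\supp\varrho$ together with all of its $\delta$-neighbourhoods; covering $K$ by finitely many translates of the fundamental cell and using that $f$ together with all of its derivatives and all differences $\Delta_{h}^{m}$ thereof are $2\pi$-periodic, one obtains the elementary bound $\|g\|_{L_{p}(K)}\lesssim\|g\|_{L_{p}(\T^{n})}$ for every $\R^{n}$-periodic $g$. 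The Sobolev part is then immediate: by the Leibniz rule, $\|\varrho f\|_{W^{[s]^{-}}_{p}(\R^{n})}\lesssim\|f\|_{W^{[s]^{-}}_{p}(K)}\lesssim\|f\|_{W^{[s]^{-}}_{p}(\T^{n})}\le\|f\|_{B^{s}_{pq}(\T^{n})}$, so it remains to control the second-difference seminorm of $\varrho f$.

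For the seminorm I would expand $\Delta_{h}^{2}\partial^{\alpha}(\varrho f)$, $|\alpha|=[s]^{-}$, using the Leibniz rule for $\partial^{\alpha}$ together with the second-difference product rule
\[
\Delta_{h}^{2}(gk)=g(\cdot+2h)\,\Delta_{h}^{2}k+2\,(\Delta_{h}g)(\cdot+h)\,\Delta_{h}k+(\Delta_{h}^{2}g)\,k,
\]
which presents $\Delta_{h}^{2}\partial^{\alpha}(\varrho f)$ as a finite sum of terms $(\Delta_{h}^{j}\partial^{\beta}\varrho)(\cdot+(2-j)h)\cdot\Delta_{h}^{2-j}\partial^{\alpha-\beta}f$ with $0\le|\beta|\le[s]^{-}$ and $0\le j\le2$. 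The single term with $\beta=0$, $j=0$ equals $\varrho(\cdot+2h)\,\Delta_{h}^{2}\partial^{\alpha}f$, which is bounded in $L_{p}(\R^{n})$ by $\|\varrho\|_{\infty}\|\Delta_{h}^{2}\partial^{\alpha}f\|_{L_{p}(K)}\lesssim\|\Delta_{h}^{2}\partial^{\alpha}f\|_{L_{p}(\T^{n})}$, so upon weighting and integrating it reproduces exactly the $B^{s}_{pq}(\T^{n})$-seminorm of $f$. In every remaining term at least one derivative or difference lands on $\varrho$: a difference on $\varrho$ contributes a factor $|h|^{j}$ by smoothness, while a derivative on $\varrho$ forces $\partial^{\alpha-\beta}f\in W^{|\beta|}_{p}(\T^{n})$ and hence $\|\Delta_{h}^{2-j}\partial^{\alpha-\beta}f\|_{L_{p}(K)}\lesssim|h|^{\min\{2-j,|\beta|\}}$. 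Thus every such term gains a total power $j+\min\{2-j,|\beta|\}\ge1$ of $|h|$, which against the weight $|h|^{-\{s\}^{+}}$ ($\{s\}^{+}\le1$) and the measure $\diff h/|h|^{n}$ makes it integrable over $|h|<\delta$ and bounded by $\|f\|_{B^{s}_{pq}(\T^{n})}$ --- \emph{except} when that power equals $1$, which happens only for the two families $(\Delta_{h}\varrho)(\cdot+h)\,\Delta_{h}\partial^{\alpha}f$ and $(\partial^{\beta}\varrho)(\cdot+2h)\,\Delta_{h}^{2}\partial^{\alpha-\beta}f$ with $|\beta|=1$, and only when $\{s\}^{+}=1$, i.e. when $s\in\N$.

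The hard part is therefore the integer case. For $q=\infty$ it is still harmless, since there only boundedness (not decay) of the relevant supremum is needed and the crude bounds $\|\Delta_{h}\partial^{\alpha}f\|_{L_{p}}\le2\|\partial^{\alpha}f\|_{L_{p}}$ and $\|\Delta_{h}^{2}\partial^{\alpha-\beta}f\|_{L_{p}}\lesssim|h|$ suffice. For $1\le q<\infty$ I would remove the first difference of $\partial^{\alpha}f$ in the first family by iterating \eqref{2nd difference} to the telescoping identity $\Delta_{h}g=-\sum_{k\ge0}2^{-k-1}\Delta_{2^{k}h}^{2}g$ (convergent in $L_{p}$), then use Minkowski's inequality in $L^{q}(\diff h/|h|^{n})$, the dyadic rescaling $h\mapsto2^{k}h$, and $\|\Delta_{2^{k}h}^{2}\partial^{\alpha}f\|_{L_{p}}\le4\|\partial^{\alpha}f\|_{L_{p}}$ on the range $2^{k}|h|\ge\delta$, which bounds this contribution by $\|f\|_{B^{s}_{pq}(\T^{n})}$. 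For the second family I would use that $\partial^{\alpha-\beta}f\in W^{1}_{p}(\T^{n})$, whence $\Delta_{h}^{2}\partial^{\alpha-\beta}f(x)=\int_{0}^{1}h\cdot(\Delta_{h}\nabla\partial^{\alpha-\beta}f)(x+th)\,\diff t$ and therefore $\|\Delta_{h}^{2}\partial^{\alpha-\beta}f\|_{L_{p}(K)}\lesssim|h|\,\omega_{1}(\nabla\partial^{\alpha-\beta}f,|h|)_{L_{p}(\T^{n})}$ in terms of the $L_{p}$-modulus of continuity; the components of $\nabla\partial^{\alpha-\beta}f$ are $[s]^{-}$-order derivatives of $f$, which carry the full second-difference information $\int_{0}^{\delta}(u^{-1}\omega_{2}(\partial^{\gamma}f,u)_{p})^{q}\,\diff u/u<\infty$. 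Feeding this into the Marchaud inequality $\omega_{1}(g,t)_{p}\lesssim t\int_{t}^{1}u^{-2}\omega_{2}(g,u)_{p}\,\diff u+t\|g\|_{p}$ and then invoking a Hardy-type inequality to convert $\int_{0}^{\delta}r^{q}\bigl(\int_{r}^{1}u^{-2}\omega_{2}(\partial^{\gamma}f,u)_{p}\,\diff u\bigr)^{q}\,\diff r/r$ into a multiple of $\int_{0}^{1}(u^{-1}\omega_{2}(\partial^{\gamma}f,u)_{p})^{q}\,\diff u/u+\|f\|_{W^{[s]^{-}}_{p}(\T^{n})}^{q}\lesssim\|f\|_{B^{s}_{pq}(\T^{n})}^{q}$ closes this last case. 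Collecting all the estimates yields the stated inequality, uniformly in $f$, since every constant depends only on $\varrho,\delta,n,s,p,q$.
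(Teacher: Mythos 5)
Your proof is correct, but it takes a genuinely different route from the paper's. Both arguments restrict the difference integrals to $|h|<\delta$ via Remark~\ref{index s and h}, use the compact support of $\varrho$ and periodicity of $f$ to pass from $L_p(\R^n)$- to $L_p(\T^n)$-norms, and recognize that after the second-difference product rule the only troublesome contributions are those gaining only a single power of $|h|$, which against the weight $|h|^{-\{s\}^+}$ are borderline precisely when $s\in\N$. Where you diverge is in handling this borderline case. The paper first reduces, via Leibniz, to $[s]^-=0$, $\{s\}^+=1$, so that only the single family $\Delta_{2h}f\cdot\Delta_h\varrho$ survives (your second family, where a derivative lands on $\varrho$, never appears); it then applies \eqref{2nd difference} once, splits the resulting $|\eta|$-integral into $|\eta|<\delta$ and $\delta<|\eta|<2\delta$, and closes by a self-referential absorption argument. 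You instead keep general $\alpha$ with $|\alpha|=[s]^-$, which produces two borderline families, and you treat the first with the full telescoping series $\Delta_h g = -\sum_{k\ge0}2^{-k-1}\Delta_{2^k h}^2 g$ followed by Minkowski and dyadic rescaling (the resulting $k^{1/q}$ growth being harmless against $2^{-k}$), and the second with the modulus-of-continuity chain $\|\Delta_h^2\partial^{\alpha-\beta}f\|_p\lesssim|h|\,\omega_1(\nabla\partial^{\alpha-\beta}f,|h|)_p$, Marchaud, and a Hardy-type inequality. Your route is heavier in machinery but has one technical advantage: by summing the telescope to infinity you obtain the desired bound directly, without needing the a priori finiteness of $\int_{|h|<\delta}\|\Delta_{2h}f\|^q_{L_p}\,\diff h/|h|^n$ that the paper's absorption step implicitly uses (which, strictly speaking, also needs a Marchaud-type argument or a density argument to justify). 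The paper's route is shorter because the Leibniz reduction eliminates the derivative-on-$\varrho$ family at the outset.
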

\begin{proof}
{\color{black} Let \(\varrho \in C_c^\infty(\R^n)\) be as in the lemma.} Since $f\in B^{s}_{pq}(\T^{n})$ can be identified with a $2\pi$-periodic function  on $\R^{n}$ (still denoted by $f$) as mentioned above, the finiteness of $\varrho f$ (as a function on $\R^{n}$) under the norm $\|\cdot\|_{B^{s}_{pq}(\R^{n})}$ follows from the {\color{black} smoothness and compact support} of $\varrho$. {\color{black} In the following, we give the full details for the case $1\leq q<\infty$; the proof for $q=\infty$ then follows with minor changes using the same procedure. We also consider separately the cases when \(s\) is, or is not, an integer.}

\textit{The case when $s$ is not an integer.} 
By {\color{black} the} Leibniz rule for differentiation, it suffices to consider the case when $s\in (0,1)$, where $[s]^{-}=0$ and $\{s\}^{+}=s$.  {\color{black} By  Remark~\ref{index s and h},} we {\color{black} then} have
\begin{equation*}\label{0 order reduction}
\begin{split}
\|f\varrho\|_{B^{s}_{pq}(\mathbb{R}^{n})}
&\lesssim\|f\varrho\|_{L_{p}(\mathbb{R}^{n})}+\left(\int_{|h|<\delta}|h|^{-s q}\|\Delta_{h}^{2} {\color{black} (f\varrho)} \|^{q}_{L_{p}(\mathbb{R}^{n})}\frac{\mathrm{d}h}{|h|^{n}}\right)^{\frac{1}{q}},
\end{split}
\end{equation*}
for {\color{black} a fixed} $\delta \in (0,\frac{1}{2})$. {\color{black} Given \(\varrho\) there exists \(N\) such that  $\mathrm{supp}(\varrho)=[a,b]^{n}\subset[-N\pi,N\pi]^{n}$. Then}
\begin{equation}\label{LP control}
\|f\varrho\|_{L_{p}(\mathbb{R}^{n})}\leq \|f\varrho\|_{L_{p}([-N\pi,N\pi]^{n})}\leq N^{n}\|\varrho\|_{L_{\infty}(\R^{n})}\|f\|_{L_{p}(\T^{n})},
\end{equation}
and 
\begin{equation}\label{compact support 2nd difference}
\color{black}
\|\Delta_{h}^{2}(f\varrho)\|_{L_{p}(\mathbb{R}^{n})}=\|\Delta_{h}^{2}(f\varrho)\|_{L_{p}([a-2\delta,b+2\delta]^{n})},
\end{equation}  
where we used the identification between functions on $\T^{n}$ and $2\pi$-periodic functions on $\R^{n}$. In view of  {\color{black} (this can be seen by rewriting)}
\begin{equation}\label{2 order iteration}
\Delta^{2}_{h}(f\varrho)(x)=(\Delta^{2}_{h}f)(x)\varrho(x+h)+f(x+2h)(\Delta^{2}_{h}\varrho)(x)+(\Delta_{2h}f)(x)(\Delta_{h} \varrho)(x),
\end{equation}
we derive from \eqref{LP control} and \eqref{compact support 2nd difference} that
\begin{equation}\label{3 terms decom}
\color{black}
\begin{split}
\|f\varrho\|_{B^{s}_{pq}(\mathbb{R}^{n})}&\lesssim N^{n}\|\varrho\|_{L_{\infty}(\R^{n})}\|f\|_{L_{p}(\T^{n})}\\
&\quad +\left(\int_{|h|<\delta}|h|^{-s q}\|(\Delta^{2}_{h}f) \varrho(\cdot+h)\|^{q}_{L_{p}([a-2\delta,b+2\delta]^{n})}\frac{\mathrm{d}h}{|h|^{n}}\right)^{\frac{1}{q}}\\
&\quad +\left(\int_{|h|<\delta}|h|^{-s q}\|f(\cdot+2h)\Delta^{2}_{h}\varrho\|^{q}_{L_{p}([a-2\delta,b+2\delta]^{n})}\frac{\mathrm{d}h}{|h|^{n}}\right)^{\frac{1}{q}}\\
&\quad +\left(\int_{|h|<\delta}|h|^{-s q}\|\Delta_{2h}f \Delta_{h} \varrho \|^{q}_{L_{p}([a-2\delta,b+2\delta]^{n})}\frac{\mathrm{d}h}{|h|^{n}}\right)^{\frac{1}{q}}\\
&=:N^{n}\|\varrho\|_{L_{\infty}(\R^{n})}\|f\|_{L_{p}(\T^{n})}+T_{1}+T_{2}+T_{3}.
\end{split}
\end{equation}
By the periodicity of $\Delta^{k}_{h}f$, $k\geq 1$, we estimate $T_{1}$ as follows
\begin{equation*}\label{T1 esti}
\begin{split}
T_{1}
&\leq \|\varrho\|_{L_{\infty}(\R^{n})}\left(\int_{|h|<\delta}|h|^{-s q}\|\Delta^{2}_{h} {\color{black}f}\|^{q}_{L_{p}([a-2\delta,b+2\delta]^{n})}\frac{\mathrm{d}h}{|h|^{n}}\right)^{\frac{1}{q}}\\
&\leq (N+2)^{n}\|\varrho\|_{L_{\infty}(\R^{n})}\left(\int_{\T^{n}}|h|^{-s q}\|\Delta^{2}_{h}f\|^{q}_{L_{p}(\T^{n})}\frac{\mathrm{d}h}{|h|^{n}}\right)^{\frac{1}{q}},
\end{split}
\end{equation*}
{\color{black} where in the last inequality we used the fact that $[a-2\delta,b+2\delta]^{n}\subset [-(N+2)\pi,(N+2)\pi]^{n}$ and the identification between functions on $\T^{n}$ and the $2\pi$-periodic functions on $\R^{n}$.}

Before estimating the term $T_{2}$, we observe from mean value theorem for scalar \cite{hubbard1999vector} and vector-valued  \cite{lang2013undergraduate} multi-variable functions  that
\begin{equation}\label{esti Delta 2 g}
\begin{split}
|\Delta^{2}_{h}\varrho|&=\left|[\varrho(x+2h)-\varrho(x+h)]-[\varrho(x+h)-\varrho(x)]\right|\\
&=|\nabla\varrho(\xi_{1})\cdot h-\nabla\varrho(\xi_{2})\cdot h|\\
&=|\int_{0}^{1}\nabla\Big((\nabla\varrho)(t\xi_{1}+(1-t)\xi_{2})\Big)\, \diff t\cdot(\xi_{1}-\xi_{2})||h|\\
&\leq 2\sup_{t\in[0,1]}|\mathrm{Hess}(\varrho)(t\xi_{1}+(1-t)\xi_{2})||h|^{2},
\end{split}
\end{equation}
where $\nabla$ and $\mathrm{Hess}$ denote the gradient and the Hessian Matrix, respectively; $\xi_{1}$ lies on the segment with endpoints $x+2h $ and $x+h$, $\xi_{2}$ lies on the segment with endpoints $x+h$ and $x$. Then, $T_{2}$ can be estimated as
\begin{equation*}\label{T2 esti}
\begin{split}
T_{2}
&\leq 2\|\mathrm{Hess}(\varrho)\|_{L_{\infty}(\R^{n})}\left(\int_{|h|<\delta}|h|^{(2-s) q}\|{\color{black} f(\cdot+2h)}\|^{q}_{L_{p}([a-2\delta,b+2\delta]^{n})}\frac{\mathrm{d}h}{|h|^{n}}\right)^{\frac{1}{q}}\\
&\leq 2\|\mathrm{Hess}(\varrho)\|_{L_{\infty}(\R^{n})}\|{\color{black} f}\|_{L_{p}([a-4\delta,b+4\delta]^{n})}\left(\int_{|h|<\delta}|h|^{(2-s) q}\frac{\mathrm{d}h}{|h|^{n}}\right)^{\frac{1}{q}}\\
& {\color{black} \lesssim}  (N+4)^{n} \|\mathrm{Hess}(\varrho)\|_{L_{\infty}(\R^{n})}\| {\color{black} f}\|_{L_{p}(\T^{n})},
\end{split}
\end{equation*} 
{\color{black} where we have used that \(\int_{|h|<\delta}|h|^{(2-s) q}\frac{\mathrm{d}h}{|h|^{n}}\) is finite and absolutely bounded for $s=\{s\}^{+}\in (0,1)$ and \(\delta \in (0, \frac{1}{2})\). In the first inequality above we applied \eqref{esti Delta 2 g}, in the second the variable substitution $x+2h\mapsto x$ and in the last that $[a-4\delta,b+4\delta]^{n}\subset [-(N+4)\pi,(N+4)\pi]^{n}$.} The term $T_{3}$ can be similarly estimated:
\begin{equation*}\label{T3 esti}
\begin{split}
T_{3}
&\leq \|\nabla\varrho\|_{L_{\infty}(\R^{n})}\left(\int_{|h|<\delta}|h|^{(1-s) q}\|{\color{black} f(\cdot+2h)}\|^{q}_{L_{p}([a-2\delta,b+2\delta]^{n})}\frac{\mathrm{d}h}{|h|^{n}}\right)^{\frac{1}{q}}\\
&\quad+\|\nabla\varrho\|_{L_{\infty}(\R^{n})}\left(\int_{|h|<\delta}|h|^{(1-s) q}\|{\color{black} f}\|^{q}_{L_{p}([a-2\delta,b+2\delta]^{n})}\frac{\mathrm{d}h}{|h|^{n}}\right)^{\frac{1}{q}}\\
&\leq 2\|\nabla\varrho\|_{L_{\infty}(\R^{n})}\|{\color{black} f}\|_{L_{p}([a-4\delta,b+4\delta]^{n})}\left(\int_{|h|<\delta}|h|^{(1-s) q}\frac{\mathrm{d}h}{|h|^{n}}\right)^{\frac{1}{q}}\\
&{\color{black} \lesssim} (N+4)^{n}\|\nabla\varrho\|_{L_{\infty}(\R^{n})} \|{\color{black} f}\|_{L_{p}(\T^{n})},
\end{split}
\end{equation*} 
{\color{black} where again the estimate is absolute because \(\delta \in (0,\frac{1}{2})\) and  $s=\{s\}^{+}\in (0,1)$. In the first inequality we relied on the mean-value theorem for $\varrho$, in the second inequality on the variable substitution $x+2h\mapsto x$, and in the third on that $[a-4\delta,b+4\delta]^{n}\subset [-(N+4)\pi,(N+4)\pi]^{n}$. By} inserting above estimates for $T_{1}$, $T_{2}$ and $T_{3}$ into \eqref{3 terms decom}, we have 
\begin{equation*}
\begin{split}
\|f\varrho\|_{B^{s}_{pq}(\mathbb{R}^{n})}
&\lesssim_{s,p,q,n,\varrho}\left(\|f\|_{L_{p}(\mathbb{R}^{n})}+\left(\int_{\T^{n}}|h|^{-s q}\|\Delta^{2}_{h}f\|^{q}_{L_{p}(\T^{n})}\frac{\mathrm{d}h}{|h|^{n}}\right)^{\frac{1}{q}}\right)
\end{split}
\end{equation*}
{\color{black} which concludes the proof for the case when $s$ is a non-integer.}\\

\textit{The case when $s$ is an integer.} As above, we still {\color{black} pick} $\delta\in (0,\frac{1}{2})$. For an integer $s$, we have $[s]^{-}=s-1$ and $\{s\}^{+}=1$.  As mentioned above, {\color{black} by Leibniz's rule} it suffices to consider the case $[s]^{-}=0$ and $\{s\}^{+}=1$.  Similar {\color{black} to} \eqref{3 terms decom} we deduce from Remark~\ref{index s and h} that
\begin{equation}\label{0 order reduction non-int s}
\begin{split}
\|f\varrho\|_{B^{s}_{pq}(\mathbb{R}^{n})}
&\lesssim N^{n}\|\varrho\|_{L_{\infty}(\R^{n})}\|f\|_{L_{p}(\T^{n})}\\
&\quad +\left(\int_{|h|<\delta}|h|^{-q}\|(\Delta^{2}_{h}f) {\color{black} \varrho(\cdot+h)}\|^{q}_{L_{p}([a-2\delta,b+2\delta]^{n})}\frac{\mathrm{d}h}{|h|^{n}}\right)^{\frac{1}{q}}\\
&\quad +\left(\int_{|h|<\delta}|h|^{-q}\|{\color{black} f(\cdot+2h)}\Delta^{2}_{h}\varrho\|^{q}_{L_{p}([a-2\delta,b+2\delta]^{n})}\frac{\mathrm{d}h}{|h|^{n}}\right)^{\frac{1}{q}}\\
&\quad +\left(\int_{|h|<\delta}|h|^{-q}\|\Delta_{2h}f\Delta_{h} \varrho\|^{q}_{L_{p}([a-2\delta,b+2\delta]^{n})}\frac{\mathrm{d}h}{|h|^{n}}\right)^{\frac{1}{q}}\\
&=:N^{n}\|\varrho\|_{L_{\infty}(\R^{n})}\|f\|_{L_{p}(\T^{n})}+S_{1}+S_{2}+S_{3}.
\end{split}
\end{equation}
{\color{black} Analogous to the estimates for $T_{1}$ and $T_{2}$, we get control of $S_{1}$ and $S_{2}$.}
\begin{align*}
S_{1}&\leq (N+2)^{n}\|\varrho\|_{L_{\infty}(\R^{n})}\left(\int_{\T^{n}}|h|^{-q}\|\Delta^{2}_{h}f\|^{q}_{L_{p}(\T^{n})}\frac{\mathrm{d}h}{|h|^{n}}\right)^{\frac{1}{q}},\\
S_{2}& {\color{black} \lesssim} (N+4)^{n} \|\mathrm{Hess}(\varrho)\|_{L_{\infty}(\R^{n})}\|{\color{black} f}\|_{L_{p}(\T^{n})},
\end{align*}
{\color{black} where in the second estimate we have used that $\int_{|h|<\delta}|h|^{q}\frac{\mathrm{d}h}{|h|^{n}}$ is absolutely bounded for \(\delta \in (0,\frac{1}{2})\), $q\in [1,\infty)$.} However, the argument for estimating $T_{3}$ is not suitable for  $S_{3}$, since the integration near the origin {\color{black} would lead to a  logarithmic} blow-up and therefore fails to give a finite control for $S_{3}$. Instead, we use the difference iteration formula \eqref{2nd difference} and carefully analyze the {\color{black} integrals over each sub-interval}. The estimate {\color{black} for} $S_{3}$ starts as follows. 
\begin{equation}\label{0 order reduction non-int s}
\begin{split}
S_{3}
&\leq \|\nabla\varrho\|_{L_{\infty}(\R^{n})}\left(\int_{|h|<\delta}\|\Delta_{2h}f\|^{q}_{L_{p}([a-2\delta,b+2\delta]^{n})}\frac{\mathrm{d}h}{|h|^{n}}\right)^{\frac{1}{q}}\\
&\leq \frac{1}{2}\|\nabla\varrho\|_{L_{\infty}(\R^{n})}\left(\int_{|h|<\delta}\|\Delta_{2h}^{2}f\|^{q}_{L_{p}([a-2\delta,b+2\delta]^{n})}\frac{\mathrm{d}h}{|h|^{n}}\right)^{\frac{1}{q}}\\
&\quad+\frac{1}{2}\|\nabla\varrho\|_{L_{\infty}(\R^{n})}\left(\int_{|h|<\delta}\|\Delta_{4h}f\|^{q}_{L_{p}([a-2\delta,b+2\delta]^{n})}\frac{\mathrm{d}h}{|h|^{n}}\right)^{\frac{1}{q}}\\
&=:\frac{1}{2}\|\nabla\varrho\|_{L_{\infty}(\R^{n})}\left(S_{31}+S_{32}\right),
\end{split}
\end{equation}
where the first inequality used the mean value theorem for scalar functions with several variables; the second inequality used the interation formula \eqref{2 order iteration}.
{\color{black} We first make the variable substitution $2h\mapsto \eta$ in $S_{32}$ and divide the interval of integration as}
\begin{equation*}\label{lift order and step}
\begin{split}
{\color{black} (S_{32})^{q}} &=\int_{|\eta|<2\delta}\|\Delta_{2\eta}f\|^{q}_{L_{p}([a-2\delta,b+2\delta]^{n})}\frac{\mathrm{d}\eta}{|\eta|^{n}}\\
&= {\color{black}\left( \int_{|\eta|<\delta} + \int_{\delta<|\eta|<2\delta} \right) \|\Delta_{2\eta}f\|^{q}_{L_{p}([a-2\delta,b+2\delta]^{n})}\frac{\mathrm{d}\eta}{|\eta|^{n}}.}
\end{split}
\end{equation*}
Then \eqref{0 order reduction non-int s} and the above {\color{black} equality for} $S_{32}$ imply that  
\begin{equation}\label{midd contr}
\begin{split}
\Bigg(\int_{|h|<\delta}\|\Delta_{2h}f\|&^{q}_{L_{p}([a-2\delta,b+2\delta]^{n})}\frac{\mathrm{d}h}{|h|^{n}}\Bigg)^{\frac{1}{q}}\\
&\leq  S_{31}+\left(\int_{\delta<|h|<2\delta}\|\Delta_{2h}f\|^{q}_{L_{p}([a-2\delta,b+2\delta]^{n})}\frac{\mathrm{d}h}{|h|^{n}}\right)^{\frac{1}{q}}.
\end{split}
\end{equation}
Inserting \eqref{midd contr} into \eqref{0 order reduction non-int s}, we get
\begin{equation}\label{midd contr 2}
\begin{split}
S_{3}&\leq \|\nabla\varrho\|_{L_{\infty}(\R^{n})}\left(S_{31}+\left(\int_{\delta<|h|<2\delta}\|\Delta_{2h}f\|^{q}_{L_{p}([a-2\delta,b+2\delta]^{n})}\frac{\mathrm{d}h}{|h|^{n}}\right)^{\frac{1}{q}}\right).\\
\end{split}
\end{equation}
{\color{black} To estimate \(S_{31}\), since $q>1$ we again use the substitution $2h\mapsto \eta$ in $S_{31}$ to get}
\begin{equation}\label{esti S31}
\begin{aligned}
{\color{black} (S_{31})^{q}} &=\int_{|\eta|<2\delta}\|\Delta_{\eta}^{2}f\|^{q}_{L_{p}([a-2\delta,b+2\delta]^{n})}\frac{\mathrm{d}\eta}{|\eta|^{n}}\\
&\leq (N+2)^{nq}\int_{\T^{n}} |\eta|^{-q}  \|\Delta_{\eta}^{2}f\|^{q}_{L_{p}(\T^{n})}  \frac{\mathrm{d}\eta}{|\eta|^{n}},
\end{aligned}
\end{equation}
{\color{black} where the factor \(|\eta|^{-q}\) may be trivially introduced since \(|\eta| \leq 1\) and \(q \geq 1\).} For the remaining term in \eqref{midd contr 2}, we have
\begin{equation}\label{extra term far from origin}
\color{black}
\begin{split}
&\int_{\delta<|h|<2\delta}\|\Delta_{2h}f\|^{q}_{L_{p}([a-2\delta,b+2\delta])}\frac{\mathrm{d}h}{|h|^{n}}\\
&\lesssim  \int_{\delta<|h|<2\delta}\left(\|f(\cdot+2h)\|^{q}_{L_{p}([a-2\delta,b+2\delta])}+\|f\|^{q}_{L_{p}([a-2\delta,b+2\delta])}\right)\frac{\mathrm{d}h}{|h|^{n}}\\
&\lesssim \|f\|^{q}_{L_{p}([a-4\delta,b+4\delta])} \int_{\delta<|h|<2\delta}  \frac{\mathrm{d}h}{|h|^{n}}\\
&\lesssim (N+4)^{nq}\|f\|^{q}_{L_{p}(\T^{n})}\int_{\delta<|h|<2\delta}\frac{\mathrm{d}h}{|h|^{n}}.
\end{split}
\end{equation}
Then \eqref{midd contr 2}, \eqref{esti S31} and \eqref{extra term far from origin} give the {\color{black} desired} control of $S_{3}$:
\begin{equation*}\label{esti S3}
\begin{split}
S_{3}& {\color{black} \lesssim} \|\nabla\varrho\|_{L_{\infty}(\R^{n})}(N+2)^{n}\left(\int_{\T^{n}} |\eta|^{-q}  \|\Delta_{\eta}^{2}f\|^{q}_{L_{p}(\T^{n})} \frac{\mathrm{d}\eta}{|\eta|^{n}}\right)^{\frac{1}{q}}\\
& {\color{black} + (N+4)^{n}} \|\nabla\varrho\|_{L_{\infty}(\R^{n})}\|f(x)\|_{L_{p}(\T^{n})}\left(\int_{\delta<|h|<2\delta}\frac{\mathrm{d}h}{|h|^{n}}\right)^{\frac{1}{q}}.
\end{split}
\end{equation*}
Inserting  the above estimates for $S_{1}$, $S_{2}$ and $S_{3}$ into \eqref{0 order reduction non-int s}, we have
\begin{equation*}\label{contr non-int s norm}
\begin{split}
\|f\varrho\|_{B^{s}_{pq}(\mathbb{R}^{n})}&
{\color{black} \lesssim} \|f\|_{L_{p}(\T^{n})}+\left(\int_{\T^{n}}|h|^{-q}\|\Delta^{2}_{h}f\|^{q}_{L_{p}(\T^{n})}\frac{\mathrm{d}h}{|h|^{n}}\right)^{\frac{1}{q}},
\end{split}
\end{equation*}
{\color{black} where the estimate depends only on \(\varrho\), \(n\) and \(\delta\) in the way detailed in the above inequalities}. This concludes the proof for the case when $s$ is an integer.
\end{proof}

{\color{black} Using Lemma~\ref{per nonper control} we shall now prove the following result.}

\begin{theorem}[{\color{black} A} composition theorem for  Besov spaces on $\T$]\label{periodic composition theorem} 
Let $1<p<\infty$, $1< q\leq \infty$ and $s>1+\frac{1}{p}$. For a Borel measurable function $f:\mathbb{R}\to\R$, the composition operator $T_{f}$ acts on $B^{s}_{pq}(\T)$ if and only if $f\in B^{s}_{pq,\mathrm{loc}}(\R)$. Moreover, $T_{f}$ is bounded {\color{black} if} $T_{f}$ acts on $B^{s}_{pq}(\T)$. If $f(0)=0$ holds additionally, we have
\begin{equation}\label{composition bound periodic}
\|{\color{black} f \circ g}\|_{B^{s}_{pq}(\T)}\lesssim \|(f {\color{black} \varphi_a})'\|_{B^{s-1}_{pq}(\R)}
\|g\|_{B^{s}_{pq}(\T)}(1+\|g\|_{B^{s}_{pq}(\T)})^{s-1-\frac{1}{p}},
\end{equation}
{\color{black} where \(a = \|g\|_{L_{\infty}}\) and \(\varphi_a\) is the \(a^{-1}\)-dilation of $\phi$ defined in Remark \ref{norm control for compsosition}.}
\end{theorem}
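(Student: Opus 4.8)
The plan is to reduce both directions of Theorem~\ref{periodic composition theorem} to the non-periodic composition theorem (Theorem~\ref{composition theorem}, in the quantitative form Remark~\ref{norm control for compsosition}) via the localizing property Lemma~\ref{per nonper control}, using that, once the increment $h$ is confined to a small ball, the difference--derivative norm~\eqref{Besov norms on torus} is a \emph{local} quantity (Remark~\ref{index s and h}). \emph{Necessity} I would handle as on the line: for $\lambda\in\N$ take $g_\lambda=\lambda g_0$, where $g_0\in C^\infty(\T)$ is a periodic function equal to $x$ on $[-\tfrac{\pi}{2},\tfrac{\pi}{2}]$ (obtained by periodizing a smooth function on $\R$ that equals $x$ near $0$ and is supported in $(-\pi,\pi)$); since $g_\lambda\in B^s_{pq}(\T)$, the action hypothesis gives $f\circ g_\lambda\in B^s_{pq}(\T)$, and restricting to $[-\tfrac{\pi}{2},\tfrac{\pi}{2}]$, where $g_\lambda(x)=\lambda x$, yields $f\in B^s_{pq}([-\tfrac{\lambda\pi}{2},\tfrac{\lambda\pi}{2}])$; letting $\lambda\to\infty$ gives $f\in B^s_{pq,\mathrm{loc}}(\R)$. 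No condition at $0$ is forced, since constants lie in $B^s_{pq}(\T)$ (finite measure of $\T$) — the only structural difference from \cite{bourdaud2014composition}.

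For the converse and the bound~\eqref{composition bound periodic}, I would first split $f=(f-f(0))+f(0)$, noting that $f(0)$ composes into the constant $f(0)\in B^s_{pq}(\T)$ while $f-f(0)\in B^s_{pq,\mathrm{loc}}(\R)$ vanishes at $0$; so it suffices to treat $f(0)=0$. Fix $g\in B^s_{pq}(\T)$, set $a=\|g\|_{L_\infty}<\infty$ (finite since $s>1+\tfrac1p$ gives $B^s_{pq}(\T)\hookrightarrow C(\T)$), and pick $\varrho\in C_c^\infty(\R)$ with $0\le\varrho\le1$ and $\varrho\equiv1$ on $[-4\pi,4\pi]$. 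By Lemma~\ref{per nonper control}, $\varrho g\in B^s_{pq}(\R)$ with $\|\varrho g\|_{B^s_{pq}(\R)}\lesssim\|g\|_{B^s_{pq}(\T)}$, and $\|\varrho g\|_{L_\infty}=a$ because $\varrho\equiv1$ on a full period. Applying Theorem~\ref{composition theorem} and~\eqref{composition bound} to the outer function $f$ and the inner function $\varrho g$ (the dilation parameter being $\|\varrho g\|_{L_\infty}=a$) gives
\[
\|f\circ(\varrho g)\|_{B^s_{pq}(\R)}\lesssim\|(f\varphi_a)'\|_{B^{s-1}_{pq}(\R)}\,\|\varrho g\|_{B^s_{pq}(\R)}\,\bigl(1+\|\varrho g\|_{B^s_{pq}(\R)}\bigr)^{s-1-\frac1p}.
\]
It remains to prove the transfer estimate $\|f\circ g\|_{B^s_{pq}(\T)}\lesssim\|f\circ(\varrho g)\|_{B^s_{pq}(\R)}$; combined with the last display, with $\|\varrho g\|_{B^s_{pq}(\R)}\lesssim\|g\|_{B^s_{pq}(\T)}$, and with monotonicity of the right-hand side, this is exactly~\eqref{composition bound periodic}. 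The same chain shows $T_f$ acts on $B^s_{pq}(\T)$; that it maps bounded sets to bounded sets then follows by replacing $\varphi_a$ with $\varphi_{M'}$ for a fixed $M'$ bounding the relevant sup-norms (using $f\circ g=(f\varphi_{M'})\circ g$ when $\|g\|_{L_\infty}\le M'$) and invoking the boundedness statement of Theorem~\ref{composition theorem}.

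The transfer estimate I would prove using Remark~\ref{index s and h} to replace the torus norm of $F:=f\circ g$ by the equivalent one in which $h$ ranges only over $|h|<\delta$, for a fixed $\delta\in(0,\pi)$. Evaluating~\eqref{Besov norms on torus} over the fundamental domain $[-\pi,\pi]$, every point $x+jh$ with $j\in\{0,1,2\}$, $x\in[-\pi,\pi]$, $|h|<\delta$ lies in $[-\pi-2\delta,\pi+2\delta]\subset(-4\pi,4\pi)$, where $g\equiv\varrho g$; since differences and derivatives are local operations, $\partial^{\alpha}\Delta_h^2F=\partial^{\alpha}\Delta_h^2(f\circ(\varrho g))$ there, so each $L_p([-\pi,\pi])$-norm appearing in the torus norm of $F$ equals the corresponding norm of $f\circ(\varrho g)$ and is dominated by its $L_p(\R)$-counterpart, and likewise for the $W^{[s]^{-}}_{p}$-part. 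Summing over $|\alpha|=[s]^{-}$ and integrating (or taking a supremum, when $q=\infty$) in $h$ gives the estimate.

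The main obstacle is Lemma~\ref{per nonper control} itself — the localizing estimate passing from $\|g\|_{B^s_{pq}(\T)}$ to $\|\varrho g\|_{B^s_{pq}(\R)}$, which is delicate at integer $s$ — together with the bookkeeping that makes $f\circ g$ and $f\circ(\varrho g)$ agree on a neighbourhood of the fundamental domain large enough to absorb all second differences and all derivatives up to order $[s]^{-}$, which is why $\varrho$ must be taken equal to $1$ on a set comfortably larger than a period. Everything else reduces to the $\R$-theory already available. A secondary point of care: applying Theorem~\ref{composition theorem} directly to $f$ with inner function $\varrho g$, rather than to the truncation $f\varphi_a$, is what leaves the clean cut-off $\varphi_a$ — and not $\varphi_a^2$ or a differently scaled one — in the final bound~\eqref{composition bound periodic}.
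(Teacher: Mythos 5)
Your sufficiency argument and the derivation of the quantitative bound~\eqref{composition bound periodic} follow the same route as the paper: choose $\varrho\in C_c^\infty(\R)$ with $\varrho\equiv 1$ on a neighbourhood of $[-\pi,\pi]$ wide enough to absorb all second differences, use Lemma~\ref{per nonper control} to control $\|\varrho g\|_{B^s_{pq}(\R)}$ by $\|g\|_{B^s_{pq}(\T)}$, apply Theorem~\ref{composition theorem}/Remark~\ref{norm control for compsosition} to $f\circ(\varrho g)$, and close with the transfer estimate $\|f\circ g\|_{B^s_{pq}(\T)}\leq\|f\circ(\varrho g)\|_{B^s_{pq}(\R)}$ obtained from locality of differences. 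That matches the paper's argument essentially verbatim (the paper takes $h\in[-\pi,\pi]$ rather than $|h|<\delta$, an immaterial difference). The genuine divergence is in the necessity direction. You dilate a single periodic near-identity $g_0$ (equal to $x$ on $[-\pi/2,\pi/2]$) to $g_\lambda=\lambda g_0$, invoke the action hypothesis for each $\lambda$, restrict to $[-\pi/2,\pi/2]$ via Lemma~\ref{per nonper control}, and then use scaling to conclude $f\in B^s_{pq}([-\lambda\pi/2,\lambda\pi/2])$ for all $\lambda$. The paper instead tiles an arbitrary compact support $[a-1,b+1]$ by short intervals $I_j$ of length $\eta\leq 1$, builds compactly supported $g_j$ equal to $x$ on $I_j$ whose supports fit inside one period, periodizes each, and controls $\|f\varrho\|_{B^s_{pq}(\R)}$ by decomposing the difference seminorm into three terms (and again into $S_{1},S_{2},S_{3}$ with a further iteration when $s\in\N$), each eventually bounded by $\sum_j\|f\circ g_{j,\mathrm{per}}\|_{B^s_{pq}(\T)}$. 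Your dilation trick is shorter and sidesteps the delicate integer-$s$ bookkeeping in the necessity direction entirely, at the cost of invoking the dilation (quasi-)invariance of local Besov membership as an external fact, whereas the paper's patchwork construction stays strictly within the difference--derivative framework that is the theme of the section and makes all constants explicit. Both are correct.

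One small point to be explicit about: in the necessity step ``restricting to $[-\pi/2,\pi/2]$'' you are implicitly applying Lemma~\ref{per nonper control} (with a cut-off supported where $g_\lambda(x)=\lambda x$) to pass from $B^s_{pq}(\T)$ to $B^s_{pq}(\R)$ before dilating; worth saying so, since it shows that Lemma~\ref{per nonper control} is doing the real work in both directions of your proof, exactly as you flagged.
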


\begin{proof}[Proof of Theorem \ref{periodic composition theorem} ]
As before, we will only give full detail for the case $1\leq q<\infty$, since the proof for $q=\infty$ can be treated {\color{black} using the same procedure with only minor changes, and is in fact easier.} In addition, since $f(x)=(f(x)-f(0))+f(0)$ and $B^{s}_{pq}\hookrightarrow L_{\infty}$ for $s>\frac{1}{p} $, it suffices to prove the theorem for $f$ with $f(0)=0$.\\

\emph{\color{black} We first prove the sufficiency part of the result, namely that \(T_f  \colon  B^s_{pq}(\T) \to B^s_{pq}(\T)\) if \(f \in B^s_{pq,\mathrm{loc}}(\R)\).} {\color{black} Let \(g \in B^s_{pq}(\T)\),} and choose $\varrho\in C_{c}^{\infty}(\R)$ such that $\varrho=1$ on $(-4\pi,4\pi)$ with $\color{black} \mathrm{supp}(\varrho) \subset [-6\pi,6\pi]$. Denoting by $(g\varrho)_{\mathrm{per}}$ the periodic extension of $g\varrho$ from $[-\pi,\pi]$ to $\R$, we notice that the $2\pi$-periodic function $(g\varrho)_{\mathrm{per}}$ on $\R$ can be identified with $g$ on $\T$. By  \eqref{Besov norms on torus} and \eqref{periodic torus norm identify} we {\color{black} then} have
\begin{equation*}\label{key relation peri-nonperi}
\begin{split}
\|{\color{black} f \circ g} \|_{B^{s}_{pq}(\mathbb{T})}
&=\|{\color{black} f \circ (g\varrho)_{\mathrm{per}}} \|_{W^{[s]^{-}}_{p}([-\pi,\pi])}\\
&\quad+\sum_{|\alpha|=[s]^{-}}\bigg(\int_{[-\pi,\pi]}|h|^{-\{s\}q}\|\Delta_{h}^{2}\partial^{\alpha}[f\circ(g\varrho)_{\mathrm{per}}]\|^{q}_{L_{p}([-\pi,\pi])}\frac{\mathrm{d}h}{|h|}\bigg)^{\frac{1}{q}}\\
&\leq \|{\color{black} f \circ (g\varrho)}\|_{W^{[s]^{-}}_{p}(\R)}+\sum_{|\alpha|=[s]^{-}}\bigg(\int_{\R}|h|^{-\{s\}q}\|\Delta_{h}^{2}\partial^{\alpha}f(g\varrho)\|^{q}_{L_{p}(\R)}\frac{\mathrm{d}h}{|h|}\bigg)^{\frac{1}{q}}
\end{split}
\end{equation*}
Then Theorem \ref{composition theorem}, Remark \ref{norm control for compsosition} and Lemma \ref{per nonper control}  imply that
\begin{equation*}
\begin{split}
\|{\color{black} f \circ g}\|_{B^{s}_{pq}(\mathbb{T})}&\leq \|{\color{black} f \circ (g\varrho)}\|_{B^{s}_{pq}(\R)}\\
&\lesssim\|(f\varphi_{\|g\varrho\|_{L_{\infty}}})'\|_{B^{s-1}_{pq}(\R)}\|g\varrho\|_{B^{s}_{pq}(\R)}(1+\|g\varrho\|_{B^{s}_{pq}(\R)})^{s-1-\frac{1}{p}}\\
&\lesssim \|(f\varphi_{\|g\|_{L_{\infty}}})'\|_{B^{s-1}_{pq}(\R)}\|g\|_{B^{s}_{pq}(\mathbb{T})}(1+\|g\|_{B^{s}_{pq}(\T)})^{s-1-\frac{1}{p}},
\end{split}
\end{equation*}
{\color{black} with $\varphi_{(\cdot)}$ as in} Remark \ref{norm control for compsosition}.\\

{\color{black}  \emph{We now prove the necessity part of the result, showing that \(f \in B^s_{p,q,\text{loc}} (\R)\) if \(T_f\) acts on \(B^{s}_{p,q}(\T)\).} For any $\varrho\in C^{\infty}_{c}(\R)$ with compact support $\mathrm{supp}(\varrho) = [a,b]$, divide the interval \(I = [a-1,b+1]\) into a finite number of subintervals 
\[
I_j = [a -1 + (j-1)\eta, a -1 + j\eta], \qquad j = 1, 2, \ldots, (b-a +2)/\eta,
\]
of fixed but small length \(\eta \ll 1\) such that \((b-a + 2)/\eta\) is an integer. Then define smooth and compactly supported functions \(g_j\), \(j = 1, \ldots, (b-a +2)/\eta\), with \(|\supp(g_j)| \leq 1\) and
\[
g_j(x) = x, \qquad x \in I_j.
\]
Because the support of \(g_j\) has length less than \(2\pi\), we may extend $g_{j}$ \(2\pi\)-periodically to $\R$; denote this periodic extension by $g_{j,\mathrm{per}}$. We furthermore identify $g_{j,\mathrm{per}}$ as a function on the torus $\T$, with support of at most unit length.}\\

By {\color{black} definition of the Besov spaces and by} Remark~\ref{index s and h}, we  have
\begin{equation}\label{nece difce-difal part}
\begin{split}
\|f\varrho\|_{B^{s}_{pq}(\R)}
&\leq \|\varrho\|_{C^{[s]^{-}}(\R)}\|f\|_{W^{[s]^{-}}_{p}([a,b])}\\
&\quad+\sum_{|\alpha|\leq [s]^{-}}\bigg(\int_{|h|\leq \delta}|h|^{-\{s\}^{+}q}\|\Delta^{2}_{h}\partial^{\alpha}(f\varrho)\|_{L_{p}([a-2\delta,b+2\delta])}^{q}\frac{\diff h}{|h|}\bigg)^{\frac{1}{q}},
\end{split}
\end{equation}
{\color{black}  where we consider \(\delta \ll 1\) fixed but small enough for \(4\delta \leq 1\) (for larger \(\delta\) one needs to adjust the intervals \(I_j\) above).}
The estimate for $\|f\|_{W^{[s]^{-}}_{p}([a,b])}$ is straight-forward since 
\begin{equation}\label{local estimate comp}
{\color{black}
\begin{aligned}
\|f\|_{W^{[s]^{-}}_{p}([a,b])}&\leq \|f\|_{W^{[s]^{-}}_{p}(I)}\\
&\leq \sum_{j} \|f \circ g_{j}\|_{W^{[s]^{-}}_{p}(I_j)}\\ 
&\leq \sum_{j} \|f \circ ({g}_{j,\mathrm{per}})\|_{W^{[s]^{-}}_{p}(\T)}.
\end{aligned}
}
\end{equation}
{\color{black}  Note that the last term in \eqref{local estimate comp} is finite since, by assumption, $T_{f}$ acts on $B^{s}_{pq}(\T)$. 
The estimate of the second term in \eqref{nece difce-difal part} however needs a more careful analysis, similar to the one in the proof of Lemma \ref{per nonper control}, but it suffices to consider the case when $\alpha=0$ since the case when $\alpha\in [1,[s]^{-}]$ may be reduced to the former case by the Leibniz rule.} We are then left to control the term
\begin{equation}
\mathrm{S}(f,\varrho) =\bigg(\int_{|h|\leq \delta}|h|^{-\{s\}^{+}q}\|\Delta^{2}_{h}(f\varrho)\|_{L_{p}([a-2\delta,b+2\delta])}^{q}\frac{\diff h}{|h|}\bigg)^{\frac{1}{q}}
\end{equation}
{\color{black} from \eqref{nece difce-difal part}.}
The argument for estimating $\mathrm{S}(f,\varrho)$ varies according to whether $s$ is an integer or not. Therefore, we consider the two cases seperately.

\emph{The case when $s$ is not an integer}. In this case $\{s\}^{+}\in (0,1)$, and by \eqref{2 order iteration} we have
\begin{equation*}\label{control s nonint comp}
\begin{aligned}
\mathrm{S}(f,\varrho)&\leq \bigg(\int_{|h|\leq \delta}|h|^{-\{s\}^{+}q}\|(\Delta^{2}_{h}f)\varrho(\cdot+h)\|_{L_{p}([a-2\delta,b+2\delta])}^{q}\frac{\diff h}{|h|}\bigg)^{\frac{1}{q}}\\
&\quad+ \bigg(\int_{|h|\leq \delta}|h|^{-\{s\}^{+}q}\|f(\cdot+2h) \Delta^{2}_{h}\varrho\|_{L_{p}([a-2\delta,b+2\delta])}^{q}\frac{\diff h}{|h|}\bigg)^{\frac{1}{q}}\\
&\quad+ \bigg(\int_{|h|\leq \delta}|h|^{-\{s\}^{+}q}\|(\Delta_{2h}f) \Delta_{h}\varrho\|_{L_{p}([a-2\delta,b+2\delta])}^{q}\frac{\diff h}{|h|}\bigg)^{\frac{1}{q}}\\
&=:T_{1}+T_{2}+T_{3}.
\end{aligned}
\end{equation*}
We first estimate $T_{1}$:
\begin{equation*}\label{control s nonint comp T1}
{\color{black}
\begin{split}
{\textstyle \frac{1}{2}} T_{1} &=  \|\varrho\|_{L_{\infty}(\R)}\bigg(\sum_{j} \int_{|h|\leq \delta}|h|^{-\{s\}^{+}q}\|\Delta^{2}_{h}(f \circ g_{j})\|_{L_{p}(I_j \cap [a-2\delta, b + 2\delta])}^{q}\frac{\diff h}{|h|}\bigg)^{\frac{1}{q}}\\
&\leq \|\varrho\|_{L_{\infty}(\R)}\bigg(\sum_{j} \int_{|h|\leq \delta}|h|^{-\{s\}^{+}q}\|\Delta^{2}_{h}(f \circ {g}_{j})\|_{L_{p}(\R)}^{q}\frac{\diff h}{|h|}\bigg)^{\frac{1}{q}}\\
&\leq \|\varrho\|_{L_{\infty}(\R)}\sum_{j} \bigg(\int_{\T}|h|^{-\{s\}^{+}q}\|\Delta^{2}_{h}(f \circ {g}_{j,\mathrm{per}})\|_{L_{p}(\T)}^{q}\frac{\diff h}{|h|}\bigg)^{\frac{1}{q}}.
\end{split}
}
\end{equation*}
{\color{black} By the mean value theorem and the variable substitution $x+2h\mapsto y$ we may then estimate $T_{2}$ similarly as follows.
\begin{equation*}\label{control s nonint comp T2}
\begin{split}
{\textstyle \frac{1}{2}} T_{2}
&= \|\varrho''\|_{L_{\infty}(\R)}\bigg(\int_{|h|\leq \delta}|h|^{-\{s\}^{+}q}|h|^{2q}\|f\|_{L_{p}([a-2\delta+2h,b+2\delta+2h])}^{q}\frac{\diff h}{|h|}\bigg)^{\frac{1}{q}}\\
&\leq \|\varrho''\|_{L_{\infty}(\R)}\bigg(\sum_{j} \int_{|h|\leq \delta}|h|^{(2-\{s\}^{+})q}\|f \circ g_{j}\|_{L_{p}(I_j)}^{q}\frac{\diff h}{|h|}\bigg)^{\frac{1}{q}}\\
&\leq \|\varrho''\|_{L_{\infty}(\R)}\bigg(\int_{|h|\leq \delta}|h|^{(2-\{s\}^{+})q}\frac{\diff h}{|h|}\bigg)^{\frac{1}{q}}\sum_{j} \|f \circ {g}_{j} \|_{L_{p}(\R)}\\
&\leq \|\varrho''\|_{L_{\infty}(\R)}\bigg(\int_{|h|\leq \delta}|h|^{(2-\{s\}^{+})q}\frac{\diff h}{|h|}\bigg)^{\frac{1}{q}}\sum_{j} \|f \circ {g}_{j,\mathrm{per}} \|_{L_{p}(\T)}.
\end{split}
\end{equation*}
The term $T_{3}$ can be estimated similarly as $T_{2}$.
\begin{equation*}\label{control s nonint comp T3}
\begin{split}
T_{3}
 &\lesssim \|\varrho'\|_{L_{\infty}(\R)}\bigg(\int_{|h|\leq \delta}|h|^{(1-\{s\}^{+})q}\|\Delta_{2h}f\|_{L_{p}([a-2\delta,b+2\delta])}^{q}\frac{\diff h}{|h|}\bigg)^{\frac{1}{q}}\\
&\lesssim \|\varrho'\|_{L_{\infty}(\R)}\bigg(\int_{|h|\leq \delta}|h|^{(1-\{s\}^{+})q}\|f\|_{L_{p}(I)}^{q}\frac{\diff h}{|h|}\bigg)^{\frac{1}{q}}\\
&\lesssim \|\varrho'\|_{L_{\infty}(\R)}\bigg(\int_{|h|\leq \delta}|h|^{(1-\{s\}^{+})q}\frac{\diff h}{|h|}\bigg)^{\frac{1}{q}}\sum_{j} \|f \circ {g}_{j,\mathrm{per}}\|_{L_{p}(\T)}.
\end{split}
\end{equation*}
By the assumption that $f$ acts on $B^{s}_{pq}(\T)$ it is clear from the above estimates that $S(f,\varrho)$, and thus also \(\|f \varrho\|_{B^s_{pq}(\R)}\),  is bounded for each \(\varrho \in C_c^\infty\).\\

\emph{The case when $s$ is an integer}. In this case $\{s\}^{+}=1$, and we first notice that
\begin{equation*}\label{control s int comp}
\begin{split}
\mathrm{S}(f,\varrho)&\leq \bigg(\int_{|h|\leq \delta}|h|^{-q}\|(\Delta^{2}_{h}f) \varrho(\cdot+h)\|_{L_{p}([a-2\delta,b+2\delta])}^{q}\frac{\diff h}{|h|}\bigg)^{\frac{1}{q}}\\
&\quad+ \bigg(\int_{|h|\leq \delta}|h|^{-q}\|f(\cdot +2h) \Delta^{2}_{h}\varrho\|_{L_{p}([a-2\delta,b+2\delta])}^{q}\frac{\diff h}{|h|}\bigg)^{\frac{1}{q}}\\
&\quad+ \bigg(\int_{|h|\leq \delta}|h|^{-q}\|(\Delta_{2h}f) \Delta_{h}\varrho \|_{L_{p}([a-2\delta,b+2\delta])}^{q}\frac{\diff h}{|h|}\bigg)^{\frac{1}{q}}\\
&=:S_{1}+S_{2}+S_{3}.
\end{split}
\end{equation*}
We estimate $S_{1}$ and $S_{2}$ analogously to $T_{1}$ and $T_{2}$, by
\begin{equation*}\label{control s int comp S1 S2}
\begin{split}
S_{1}&\lesssim \|\varrho\|_{L_{\infty}(\R)}\sum_{j} \bigg(\int_{\T}|h|^{-q}\| \Delta^{2}_{h}(f \circ {g}_{j,\mathrm{per}}) \|_{L_{p}(\T)}^{q}\frac{\diff h}{|h|}\bigg)^{\frac{1}{q}},\\
S_{2}&\lesssim \|\varrho''\|_{L_{\infty}(\R)}\bigg(\int_{|h|\leq \delta}|h|^{q}\frac{\diff h}{|h|}\bigg)^{\frac{1}{q}}\sum_{j} \| f \circ {g}_{j,\mathrm{per}} \|_{L_{p}(\T)}.
\end{split}
\end{equation*}
The estimate for $S_{3}$, on the other hand, starts in a way identical to \eqref{0 order reduction non-int s}--\eqref{midd contr 2}. Just as in~\eqref{midd contr 2} that yields
\begin{equation}\label{midd contr 2 comp}
\begin{split}
S_{3}&\leq \|\nabla\varphi\|_{L_{\infty}(\R)}\left(S_{31}+\left(\int_{\delta<|h|<2\delta}\|\Delta_{2h}f\|^{q}_{L_{p}([a-2\delta,b+2\delta])}\frac{\mathrm{d}h}{|h|}\right)^{\frac{1}{q}}\right),
\end{split}
\end{equation}
with
\begin{equation}\label{esti S31 comp}
\begin{split}
S_{31} &= \left(\int_{|h| < \delta} \|\Delta_{2h}^2 f\|^{q}_{L_{p}([a-2\delta,b+2\delta])}\frac{\mathrm{d}h}{|h|}\right)^{\frac{1}{q}}\\
&\leq\sum_{j} \left( \int_{|\eta|<2\delta}|\eta|^{-q}\|\Delta_{\eta}^{2} (f \circ g_j)\|^{q}_{L_{p}(I_j)}\frac{\mathrm{d}\eta}{|\eta|}\right)^{\frac{1}{q}}\\
&\leq\sum_{j} \left( \int_{|\eta|<2\delta}|\eta|^{-q}\|\Delta_{\eta}^{2} (f \circ g_j)\|^{q}_{L_{p}(\R)}\frac{\mathrm{d}\eta}{|\eta|}\right)^{\frac{1}{q}}\\
&\leq \sum_{j} \left( \int_{\T}|\eta|^{-q}\|\Delta_{\eta}^{2} (f \circ {g}_{j,\mathrm{per}})]\|^{q}_{L_{p}(\T)}\frac{\mathrm{d}\eta}{|\eta|}\right)^{\frac{1}{q}},
\end{split}
\end{equation}
by the construction of the functions \(\{g_j\}_j\). For the remaining term in \eqref{midd contr 2 comp}, it may similarly be estimated as
\begin{equation}\label{extra term far from origin comp}
\begin{split}
\int_{\delta<|h|<2\delta}\|\Delta_{2h}f\|^{q}_{L_{p}([a-2\delta,b+2\delta])}\frac{\mathrm{d}h}{|h|} & \leq 2\sum_{j}  \|f \circ {g}_{j,\mathrm{per}}\|_{L_{p}(\T)}\int_{\delta<|h|<2\delta}\frac{\mathrm{d}h}{|h|}.
\end{split}
\end{equation}
Then \eqref{midd contr 2 comp}, \eqref{esti S31 comp} and \eqref{extra term far from origin comp} together give control of $S_{3}$:
\begin{equation}\label{control s int comp S3}
\begin{split}
S_{3}&\leq \|\varrho'\|_{L_{\infty}(\R)}\sum_{j} \Bigg( \left(\int_{\T}|h|^{-q}\|\Delta_{h}^{2} (f \circ {g}_{j,\mathrm{per}}) \|^{q}_{L_{p}(\T)}\frac{\mathrm{d}h}{|h|}\right)^{\frac{1}{q}}\\
&\quad+2 \|f \circ {g}_{j,\mathrm{per}}\|_{L_{p}(\T)}\int_{\delta<|h|<2\delta}\frac{\mathrm{d}h}{|h|}\Bigg)\\
\end{split}
\end{equation}
Since we are still proving the necessity part of the theorem, $f \circ {g}_{j,\mathrm{per}} \in B^{s}_{pq}(\T)$ by assumption. It is then clear from the above estimates that $S(f,\varrho)$, and thus \(\|f\|_{B^s_{pq,\text{loc}}(\R)}\), is bounded.}
\end{proof}

{\color{black}
For $s>0$ and $p=2$, it is well known that fractional Sobolev spaces are equivalent with the Bessel--Potential spaces, $W^{s}_{2}(X)=H^{s}(X)$ with \(X \in \{\R^n, \T^n\}\) in our case. A consequence of this is that the proof of Theorem~\ref{main theorem} can be followed in detail with the standard Sobolev spaces replaced by their periodic counterparts and the Fourier transform replaced accordingly, as described above. Note here that the embedding \(H^{s} \hookrightarrow BC^{1}\) for \(s > 3/2\) is equally valid in the periodic case, and Kato--Ponce commutator estimates in the periodic setting are available in \cite{MR2333212}. Analogous to the case on $\mathbb{R}$, one then obtains Theorem~\ref{main theorem} for periodic initial data.

\section*{Acknowledgements}
L. P.  wants to thank W. Sickel and H. Triebel for their guidance and helpful comments that helped in the construction of the composition theorem in the periodic setting. 
}

%

\begin{thebibliography}{10}

\bibitem{ABDELOUHAB1989360}
{\sc L.~Abdelouhab, J.~Bona, M.~Felland, and J.-C. Saut}, {\em Nonlocal models
  for nonlinear, dispersive waves}, Physica D, 40 (1989), pp.~360 -- 392.

\bibitem{bona1995conservative}
{\sc J.~Bona, V.~Dougalis, O.~Karakashian, and W.~McKinney}, {\em Conservative,
  high-order numerical schemes for the generalized {K}orteweg-de {V}ries
  equation}, Philos. Trans. Roy. Soc. London Ser. A, 351 (1995), pp.~107--164.

\bibitem{bourdaud2014composition}
{\sc G.~Bourdaud, M.~Moussai, and W.~Sickel}, {\em Composition operators acting
  on {B}esov spaces on the real line}, Ann. Mat. Pura Appl. (4), 193 (2014),
  pp.~1519--1554.

\bibitem{bourgain1995cauchy}
{\sc J.~Bourgain}, {\em On the Cauchy problem for periodic {K}d{V}-type
  equations}, {J. Fourier Anal. Appl.}, 1 (1995), pp.~17--86.

\bibitem{cascaval2004local}
{\sc R.~C. Cascaval}, {\em Local and global well-posedness for a class of
  nonlinear dispersive equations}, {Adv. Differential Equations}, 9 (2004),
  pp.~85--132.

\bibitem{colliander2003sharp}
{\sc J.~Colliander, M.~Keel, G.~Staffilani, H.~Takaoka, and T.~Tao}, {\em Sharp
  global well-posedness for {K}d{V} and modified {K}d{V} on $\mathbb{R}$ and
  $\mathbb{T}$}, {J. Amer. Math. Soc.}, 16 (2003), pp.~705--749.

\bibitem{MR1668586}
{\sc A.~Constantin and J.~Escher}, {\em Wave breaking for nonlinear nonlocal
  shallow water equations}, {Acta Math.}, 181 (1998), pp.~229--243.

\bibitem{0934.35153}
{\sc A.~Constantin and J.~Escher}, {\em Well-posedness, global existence, and
  blowup phenomena for a periodic quasi-linear hyperbolic equation}, {Comm.
  Pure Appl. Math.}, 51 (1998), pp.~475--504.

\bibitem{raey}
{\sc M.~Ehrnstr{\"o}m, J.~Escher, and L.~Pei}, {\em A note on the local
  well-posedness for the {W}hitham equation}, in Elliptic and {Parabolic}
  {Equations}, J.~Escher, E.~Schrohe, J.~Seiler, and C.~Walker, eds., vol.~119
  of Springer Proceedings in Mathematics \& Statistics, Springer International
  Publishing, 2015, pp.~63--75.

\bibitem{EGW11}
{\sc M.~Ehrnstr{\"o}m, M.~D. Groves, and E.~Wahl\'en}, {\em On the existence
  and stability of solitary-wave solutions to a class of evolution equations of
  {W}hitham type}, Nonlinearity, 25 (2012), pp.~1--34.

\bibitem{hu2013discrete}
{\sc Y.~Hu and X.~Li}, {\em Discrete {F}ourier restriction associated with
  {K}d{V} equations}, {Anal. PDE}, 6 (2013), pp.~859--892.

\bibitem{hubbard1999vector}
{\sc J.~H. Hubbard and B.~B. Hubbard}, {\em Vector {C}alculus, {L}inear
  {A}lgebra, and {D}ifferential {F}orms: A {U}nified {A}pproach}, Princeton
  Hall, 2001.

\bibitem{vera2015hur}
{\sc V.~M. Hur}, {\em Breaking in the Whitham equation for shallow water
  waves}, arXiv:1506.04075,  (2015).

\bibitem{ionescu2007global}
{\sc A.~Ionescu and C.~Kenig}, {\em Global well-posedness of the
  {B}enjamin--{O}no equation in low-regularity spaces}, {J. Amer. Math. Soc.},
  20 (2007), pp.~753--798.

\bibitem{MR2333212}
{\sc A.~D. Ionescu and C.~E. Kenig}, {\em Local and global wellposedness of
  periodic {KP}-{I} equations}, in Mathematical Aspects of Nonlinear Dispersive
  Equations, vol.~163 of Ann. of Math. Stud., Princeton Univ. Press, Princeton,
  NJ, 2007, pp.~181--211.

\bibitem{MR0407477}
{\sc T.~Kato}, {\em Quasi-linear equations of evolution, with applications to
  partial differential equations}, in Spectral theory and differential
  equations ({P}roc. {S}ympos., {D}undee, 1974; dedicated to {K}onrad
  {J}\"orgens), Springer, Berlin, 1975, pp.~25--70. Lecture Notes in Math.,
  Vol. 448.

\bibitem{MR535697}
\leavevmode\vrule height 2pt depth -1.6pt width 23pt, {\em On the
  {K}orteweg-de\thinspace {V}ries equation}, {Manuscripta Math.}, 28 (1979),
  pp.~89--99.

\bibitem{Kato95}
\leavevmode\vrule height 2pt depth -1.6pt width 23pt, {\em Perturbation
  {T}heory for {L}inear {O}perators}, Springer-Verlag, Berlin-New York, 1995.

\bibitem{katznelson2004introduction}
{\sc Y.~Katznelson}, {\em An {I}ntroduction to {H}armonic {A}nalysis},
  Cambridge University Press, 2004.

\bibitem{KLPS17}
{\sc C.~Klein, F.~Linares, D.~Pilod, and J.-C. Saut}, {\em On {Whitham} and
  related equations}.
\newblock arXiv:1706.08712.

\bibitem{lang2013undergraduate}
{\sc S.~Lang}, {\em Undergraduate {A}nalysis}, Springer Science \& Business
  Media, 2013.

\bibitem{lannes2013water}
{\sc D.~Lannes}, {\em The Water Waves Problem: Mathematical Analysis and
  Asymptotics}, American Mathematical Society, 2013.

\bibitem{0802.35002}
{\sc P.~I. Naumkin and I.~A. Shishmarev}, {\em Nonlinear {N}onlocal {E}quations
  in the Theory of Waves}, Translations of Mathematical Monographs. 133.
  Providence, RI: American Mathematical Society., 1994.

\bibitem{Pazy83}
{\sc A.~Pazy}, {\em Semigroups of {L}inear {O}perators and {A}pplications to
  {P}artial {D}ifferential {E}quations}, Springer-Verlag, Berlin-New York,
  1983.

\bibitem{SmTr87}
{\sc H.~J. Schmeisser and H.~Triebel}, {\em Topics in {F}ourier {A}nalysis and
  {F}unction {S}paces}, A Wiley-Interscience Publication, John Wiley \& Sons
  Ltd., Chichester, 1987.

\bibitem{staffilani1997generalized}
{\sc G.~Staffilani}, {\em On the generalized {K}orteweg-de {V}ries-type
  equations}, Differential and Integral Equations, 10 (1997), pp.~777--796.

\bibitem{tao2004global}
{\sc T.~Tao}, {\em Global well-posedness of the {B}enjamin--{O}no equation in
  ${H}^{1} (\mathbb{R})$}, J. Hyperbolic Differ. Equ., 1 (2004), pp.~27--49.

\bibitem{MR730762}
{\sc H.~Triebel}, {\em Theory of {F}unction {S}paces}, vol.~38 of Mathematik
  und ihre Anwendungen in Physik und Technik [Mathematics and its Applications
  in Physics and Technology], Akademische Verlagsgesellschaft Geest \& Portig
  K.-G., Leipzig, 1983.

\end{thebibliography}

\end{document}